\title{Probabilistic Error Analysis for  Inner Products\thanks{Funding: The work of the first author was supported in part by 
National Science Foundation grants DMS-1745654 and DMS-1760374. The work of the second author was supported in part
by National Human Genome Research Institute grant HG006139, and the National Institute of General Medical Sciences grant GM053275.}}
\author{Ilse C.F. Ipsen\thanks{Department of Mathematics, North Carolina State University, Raleigh, NC 27695-8205, USA,
ipsen@ncsu.edu} \and Hua Zhou\thanks{Department of Biostatistics, University of California, Los Angeles, CA 90095-1772, USA, huazhou@ucla.edu}}
\begin{document}
\maketitle

\begin{abstract}
Probabilistic models are proposed for bounding the forward error in the numerically computed
inner product (dot product, scalar product) between of two real $n$-vectors.
We derive probabilistic perturbation bounds, as well as probabilistic roundoff error bounds
for the sequential accumulation of the inner product.
These bounds are non-asymptotic, explicit, and make minimal assumptions on perturbations and roundoffs.

The perturbations are represented as  independent, bounded, zero-mean random variables,
and the probabilistic perturbation bound is based on Azuma's inequality.
The roundoffs are also represented as bounded, zero-mean random variables.
The first probabilistic bound assumes that the roundoffs are independent, 
while the second one does not. For the latter, we construct a Martingale
that mirrors the sequential order of computations.

Numerical experiments confirm that our bounds are more informative,
often by several orders of magnitude, than traditional deterministic 
bounds -- even for small vector dimensions~$n$ and very stringent success probabilities.
In particular the probabilistic roundoff error bounds are functions of $\sqrt{n}$ rather than~$n$,
thus giving a quantitative confirmation of Wilkinson's intuition. The paper concludes with a critical assessment of 
the probabilistic approach.
\end{abstract}

\begin{keywords}
Perturbation bounds, roundoff errors, random variables, sums of random variables, Martingales
\end{keywords}

\begin{AM}
65F30, 65G50, 60G42, 60G50
\end{AM}

\section{Introduction}
Probabilistic approaches towards roundoff analysis have been applied to: matrix inversion
by von Neumann \& Goldstine \cite{vNG47} and Tienari \cite{Tie70};
matrix addition and multiplication, and Runge Kutta methods by Hull \& Swenson \cite{HS66}; 
solution of ordinary differential equations by Henrici \cite{Hen63}; 
Gaussian elimination by Barlow \& Bareiss  \cite{BB80,BB85b,BB85a}; 
convolution and FFT by Calvetti \cite{Cal91a,Cal91b,Cal92}; 
solution of eigenvalue problems by Chatelin \&  Brunet \cite{BBC88,BC86,CB90}; 
LU decomposition and linear system solution by Babu\v{s}ka \& S\"{o}derlind \cite{BS18}
 and Higham and Mary \cite{HM18}.
 Yet, the futility of probabilistic roundoff error analysis has also been 
pointed out  \cite[page 2]{HS66}, \cite[Page 17]{Kah96}, since roundoffs apparently do not behave
 like random variables. 

Nevertheless, we present 
 probabilistic perturbation and roundoff error bounds for the forward error
in the numerically computed inner product\footnote{The superscript~$T$ denotes the transpose,
and for relative bounds we assume $\vx^T\vy\neq 0$.},
\begin{eqnarray*}
\vx^T\vy = x_1 y_1 +\cdots + x_ny_n,
\end{eqnarray*}
between  two real $n$-vectors
\begin{eqnarray*}
\vx=\begin{pmatrix}x_1 \\\vdots \\ x_n\end{pmatrix}\in\rn \qquad \mathrm{and}\qquad 
\vy=\begin{pmatrix}y_1\\ \vdots \\ y_n\end{pmatrix}\in\rn.
\end{eqnarray*}

\paragraph{Contributions}
The idea is to represent perturbations and roundoffs  as random variables, express the total forward error
as a sum of "local" forward errors, and then apply a concentration inequality to the sum.
In contrast to some of the previous work, the roundoffs are not required to obey a particular probability distribution.
We "motivate" the particular form of each probabilistic bound with a corresponding deterministic bound, and interpret
the various random variables in terms of particular forward errors.

Our probabilistic approach is most closely related to that of Higham and Mary~\cite{HM18} 
who derive backward error bounds. In contrast, our forward error bounds 
lead to new condition numbers (Sections \ref{s_perturb} and~\ref{s_prob1}), and they are tighter
because they avoid a union bound for the probabilities.
 Our bounds
are also simple, intuitive, and easy to interpret, with a clear relationship between failure probability and relative
error. Compared to \cite[Theorem 3.1]{HM18}, our Corollary~\ref{c_4} is tighter  and does not assume independence
of roundoffs.

\paragraph{Overview}
To facilitate the introduction of the probabilistic approach, we start as simple as possible,
with probabilistic perturbation bounds (Section~\ref{s_perturb}). The
perturbations are represented as independent, bounded, zero-mean random variables; and 
the forward error is bounded by Azuma's inequaility.
This is followed by probabilistic  roundoff error bounds for the sequential accumulation of inner products (Section~\ref{s_roundoff1}).
The roundoffs are represented as independent, bounded, zero-mean random variables;
and the forward error is, again, bounded by Azuma's inequaility.
However, numerical experiments (Section~\ref{s_numexp}) illustrate that for non-negative vectors of large dimension,
the probabilistic expression stops being an upper bound. By way of an explanation,
Henrici ends his 1963 paper \cite[page 11]{Hen63} with:

\begin{quote}
The crucial hypothesis for the above statistical theories is the hypothesis
of independence of local errors.
While this assumption seems to yield realistic results in many cases, some situations are known, [...],
where local errors definitely cannot be considered to be independent.
To elucidate the conditions under which local errors act like independent
variables would seem to be a fascinating if difficult problem.
\end{quote}

As a consequence, and in contrast to \cite{HM18}, we relinquish the independence assumption 
and derive a general probabilistic roundoff error bound (Section~\ref{s_roundoff2}).
The roundoffs are represented as bounded, zero-mean random variables; and the forward error
is bounded by an Azuma-Hoeffding Martingale.
In particular, we present a quantitative confirmation of Wilkinson's intuition \cite[Section 1.33]{Wilk94book}
that the roundoff error in $n$ operations is proportional to $\sqrt{n}\>u$ rather than $n\>u$.
The paper ends with a critical analysis of the probabilistic approach, and a long list of future work (Section~\ref{s_conc}).

\section{Perturbation bounds}\label{s_perturb}
To calibrate the roundoff error bounds and set the stage for the probabilistic approach, we start off with perturbation bounds:
first, deterministic bounds that 
generalize the traditional bound and motivate the 
probabilistic bound (Section~\ref{s_perturb1}),
and then the probabilistic bound (Section~\ref{s_perturb2}).

We use the Hadamard product 
\begin{eqnarray*} 
\vx\circ\vy \equiv \begin{pmatrix} x_1y_1 & \cdots & x_ny_n\end{pmatrix}^T
\end{eqnarray*}
to compactly express componentwise relative perturbations as
\begin{eqnarray*}
\vhx=\begin{pmatrix}(1+\delta_1)\, x_1\\ \vdots \\ (1+\delta_n)\, x_n\end{pmatrix}
=\vx+\vdelta\circ \vx,\qquad
\vhy=\begin{pmatrix}(1+\theta_1)\,y_1\\ \vdots \\ (1+\theta_n)\,y_n\end{pmatrix}
 = \vy+\vtheta\circ\vy,
 \end{eqnarray*}
where $|\delta_k|,|\theta_k|\leq u$, $1\leq k\leq n$, for some $u>0$, and the perturbation vectors are
\begin{eqnarray*}
\vdelta \equiv \begin{pmatrix} \delta_1 & \cdots &\delta_n\end{pmatrix}^T,\qquad
\vtheta\equiv \begin{pmatrix} \theta_1 & \cdots & \theta_n\end{pmatrix}^T.
\end{eqnarray*}

\subsection{Deterministic perturbation bound}\label{s_perturb1}
We generalize the traditional perturbation bound to a whole class of bounds, and single
out a specific bound to motivate the probabilistic bound in Section~\ref{s_perturb2}.

\begin{theorem}\label{t_2}
If  $\frac{1}{p}+\tfrac{1}{q}=1$, then the relative forward error in the perturbed inner product is bounded by
\begin{eqnarray*}
\left|\frac{\vhx^T\vhy-\vx^T\vy}{\vx^T\vy}\right| \leq 
\frac{\|\vx\circ\vy\|_p}{|\vx^T\vy|}\>\|\vdelta+\vtheta+\vdelta\circ\vtheta\|_q.
\end{eqnarray*}
\end{theorem}

\begin{proof}
From associativity, distributivity and the fact that all quantities are real follows
\begin{eqnarray*}
\vhx^T\vhy-\vx^T\vy&=&(\vdelta\circ \vx)^T\vy+\vx^T(\vtheta\circ \vy)+(\vdelta\circ\vx)^T(\vtheta\circ\vy)\\
&=& \sum_{k=1}^n{x_ky_k\>\left(\delta_k+\theta_k+\delta_k\theta_k\right)}
= (\vx\circ\vy)^T\left(\vdelta+\vtheta+\vdelta\circ\vtheta\right).
\end{eqnarray*}
The H\"{o}lder inequality implies
\begin{eqnarray*}
\left|(\vx\circ\vy)^T\left(\vdelta+\vtheta+\vdelta\circ\vtheta\right)\right|
\leq  \|\vx\circ\vy\|_p\>\|\vdelta+\vtheta+\vdelta\circ\vtheta\|_q.
\end{eqnarray*}
$\quad$
\end{proof}

Below is a specialization of Theorem~\ref{t_2} to popular $p$-norms.

\begin{corollary}\label{c_2}
Theorem~\ref{t_2} implies the following bounds.
\begin{enumerate}
\item Traditional bound $(p=1)$
\begin{eqnarray*}
\left|\frac{\vhx^T\vhy-\vx^T\vy}{\vx^T\vy}\right| \leq 
\frac{\|\vx\circ\vy\|_1}{|\vx^T\vy|}\>\|\vdelta+\vtheta+\vdelta\circ\vtheta\|_{\infty}\leq
\frac{|\vx|^T|\vy|}{|\vx^T\vy|}\>u(2+u).
\end{eqnarray*}
\item Same amplifier as in Theorem~\ref{t_1} $(p=2)$
\begin{eqnarray}\label{e_pb2}
\left|\frac{\vhx^T\vhy-\vx^T\vy}{\vx^T\vy}\right| &\leq& 
\frac{\|\vx\circ\vy\|_2}{|\vx^T\vy|}\>\|\delta+\vtheta+\vdelta\circ\vtheta\|_2\\
&\leq & \sqrt{n}\>\frac{\|\vx\circ\vy\|_2}{|\vx^T\vy|}\>u(2+u).\nonumber
\end{eqnarray}
\item Smallest amplifier $(p=\infty)$
\begin{eqnarray*}
\left|\frac{\vhx^T\vhy-\vx^T\vy}{\vx^T\vy}\right| \leq 
\frac{\|\vx\circ\vy\|_{\infty}}{|\vx^T\vy|}\>\|\vdelta+\vtheta+\vdelta\circ\vtheta\|_1\leq
n\>\frac{\|\vx\circ\vy\|_{\infty}}{|\vx^T\vy|}\> u(2+u).
\end{eqnarray*}
\end{enumerate}
\end{corollary}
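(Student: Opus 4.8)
The plan is to specialize Theorem~\ref{t_2} to the three conjugate pairs $(p,q)\in\{(1,\infty),(2,2),(\infty,1)\}$ and then bound the perturbation norm $\|\vdelta+\vtheta+\vdelta\circ\vtheta\|_q$ uniformly in each case. The first inequality in each of the three displayed bounds follows immediately by substituting the appropriate $(p,q)$ into the conclusion of Theorem~\ref{t_2}, so the only real work lies in establishing the second inequalities together with the rewriting of the amplifier in the traditional case.

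First I would establish a single componentwise estimate that drives all three cases. For each index $k$, the triangle inequality together with the hypotheses $|\delta_k|,|\theta_k|\leq u$ gives
\begin{eqnarray*}
|\delta_k+\theta_k+\delta_k\theta_k| \leq |\delta_k|+|\theta_k|+|\delta_k||\theta_k| \leq 2u+u^2 = u(2+u).
\end{eqnarray*}
Thus every entry of the vector $\vdelta+\vtheta+\vdelta\circ\vtheta$ is bounded in absolute value by the single scalar $u(2+u)$, independently of $k$.

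With this uniform entrywise bound in hand, I would propagate it through each relevant $q$-norm. For $q=\infty$ (case~1) the maximum entry is at most $u(2+u)$; for $q=2$ (case~2) summing $n$ squared entries and taking the square root yields the factor $\sqrt{n}\,u(2+u)$; and for $q=1$ (case~3) summing $n$ entries yields $n\,u(2+u)$. In the traditional case I would additionally rewrite the $1$-norm as $\|\vx\circ\vy\|_1=\sum_k|x_k||y_k|=|\vx|^T|\vy|$ to arrive at the stated form. No step presents a genuine obstacle; the corollary is a direct specialization of Theorem~\ref{t_2}, and the only point demanding care is confirming that the same scalar bound $u(2+u)$ feeds correctly into the three different $q$-norms, producing the characteristic dimensional factors $1$, $\sqrt{n}$, and $n$ that distinguish the three amplifiers.
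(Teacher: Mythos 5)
Your proof is correct and follows essentially the same route as the paper: specialize Theorem~\ref{t_2} to the conjugate pairs $(p,q)\in\{(1,\infty),(2,2),(\infty,1)\}$, bound each entry of $\vdelta+\vtheta+\vdelta\circ\vtheta$ by $u(2+u)$, and rewrite $\|\vx\circ\vy\|_1=|\vx|^T|\vy|$. The paper's proof merely states the $1$-norm identity and leaves the remaining steps implicit, so your write-up simply supplies the details the authors omitted.
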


\begin{proof}
The traditional bound follows from 
\begin{eqnarray*}
\|\vx\circ\vy\|_1=\sum_{k=1}^n{|x_ky_k|}=\sum_{k=1}^n{|x_k|\,|y_k|}=|\vx|^T|\vy|.
\end{eqnarray*}
$\quad$
\end{proof}

The numerical experiments in Section~\ref{s_perturbamp} 
suggest that the three bounds tend to differ by at most an order of magnitude or so, 
with the traditional bound  being the tightest.

\subsection{Probabilistic perturbation bound}\label{s_perturb2}
We derive a probabilistic bound corresponding to the deterministic bound (\ref{e_pb2}), and then compare the two bounds.

The basis for the probabilistic bounds is a concentration inequality, which bounds the deviation of a sum from its mean
in terms of the deviations of the individual summands from their means.

\begin{lemma}[Azuma's inequality, Theorem 5.3 in \cite{ChungLu2006}]\label{l_1}
Let $Z\equiv Z_1+\cdots + Z_n$ be a sum of independent random variables $Z_1, \ldots, Z_n$ with 
\begin{eqnarray*}
|Z_k-\E[Z_k]|\leq c_k, \qquad 1\leq k\leq n.
\end{eqnarray*}
Then for any $0<\delta<1$, with probability at least $1-\delta$,
\begin{eqnarray*}
\left|Z - \E[Z]\right|\leq \sqrt{\sum_{k=1}^n{c_k^2}}\>\sqrt{2\ln{(2/\delta)}}.
\end{eqnarray*}
\end{lemma}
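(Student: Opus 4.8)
The plan is to establish this via the classical Chernoff exponential-moment method combined with Hoeffding's lemma, since the stated inequality is just Hoeffding's inequality rewritten with the failure probability $\delta$ as the free parameter rather than the deviation threshold. First I would center the summands: set $W_k\equiv Z_k-\E[Z_k]$, so that each $W_k$ has zero mean and satisfies $|W_k|\leq c_k$, and write $W\equiv Z-\E[Z]=\sum_{k=1}^n W_k$. The goal then becomes bounding the tail probability $\Pr[\,|W|\geq t\,]$ for the specific threshold $t\equiv\sqrt{\sum_{k=1}^n c_k^2}\,\sqrt{2\ln(2/\delta)}$.

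For the upper tail, fix $s>0$ and apply Markov's inequality to the nonnegative random variable $e^{sW}$, which gives $\Pr[W\geq t]\leq e^{-st}\,\E[e^{sW}]$. Independence of the $Z_k$ factorizes the moment generating function as $\E[e^{sW}]=\prod_{k=1}^n\E[e^{sW_k}]$. The crux of the argument --- and the only genuinely nontrivial step --- is Hoeffding's lemma: for a zero-mean random variable $W_k$ confined to $[-c_k,c_k]$ one has $\E[e^{sW_k}]\leq e^{s^2 c_k^2/2}$. I would prove this by bounding $e^{sw}$ above on the interval by its secant line (convexity of the exponential), taking expectations to annihilate the linear term via $\E[W_k]=0$, and then showing that the logarithm of the resulting expression has second derivative at most $c_k^2$, so that a second-order Taylor expansion about $s=0$ delivers the Gaussian-type bound. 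This convexity-plus-Taylor estimate is where the real work lies; everything surrounding it is bookkeeping.

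Substituting the lemma yields $\Pr[W\geq t]\leq\exp\!\big(-st+\tfrac12 s^2\sum_{k=1}^n c_k^2\big)$ for every $s>0$. Minimizing the exponent over $s$, with optimum $s^{\ast}=t/\sum_{k=1}^n c_k^2$, collapses this to $\Pr[W\geq t]\leq\exp\!\big(-t^2/(2\sum_{k=1}^n c_k^2)\big)$. Running the identical argument on $-W$, which shares the same mean and bounds, controls the lower tail, and a union bound over the two one-sided events gives $\Pr[\,|W|\geq t\,]\leq 2\exp\!\big(-t^2/(2\sum_{k=1}^n c_k^2)\big)$. Finally I would choose $t$ so that this right-hand side equals $\delta$: solving $2\exp\!\big(-t^2/(2\sum_{k=1}^n c_k^2)\big)=\delta$ gives precisely $t=\sqrt{\sum_{k=1}^n c_k^2}\,\sqrt{2\ln(2/\delta)}$, whence $\Pr[\,|W|<t\,]\geq 1-\delta$, which is the assertion.
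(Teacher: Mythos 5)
Your proposal is correct, but it does considerably more work than the paper does: the paper's entire proof consists of quoting the two-sided tail bound $\Pr\bigl[|Z-\E[Z]|\geq t\bigr]\leq 2\exp\bigl(-t^2/(2\sum_{k=1}^n c_k^2)\bigr)$ directly from Theorem 5.3 of the cited reference, setting the right-hand side equal to $\delta$, solving for $t$, and passing to the complementary event. Your final paragraph reproduces exactly that reparametrization step; everything before it --- centering, the Chernoff/Markov exponential-moment bound, factorization of the moment generating function via independence, Hoeffding's lemma $\E[e^{sW_k}]\leq e^{s^2c_k^2/2}$ via the secant-line and Taylor argument, optimization at $s^{\ast}=t/\sum_{k}c_k^2$, and the union bound over the two tails --- is a self-contained derivation of the very inequality the paper merely cites. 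Your details check out: on the symmetric interval $[-c_k,c_k]$ the secant bound with $L''\leq c_k^2$ gives the stated constant, the optimized exponent is $-t^2/(2\sum_k c_k^2)$, and the complement of $\{|W|\geq t\}$ is contained in $\{|W|\leq t\}$, so the non-strict inequality in the statement follows. What your route buys is transparency: it isolates exactly where independence is used (only to factorize the moment generating function) and where boundedness is used (only in Hoeffding's lemma), which incidentally explains why the paper can later dispense with independence by replacing the factorization with a martingale conditioning argument (Lemma \ref{l_4}). What the paper's route buys is brevity: the cited theorem is stated in precisely the needed form, so no probabilistic work remains beyond the change of variables from the deviation threshold $t$ to the failure probability $\delta$.
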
 

\begin{proof}
In \cite[Theorem 5.3]{ChungLu2006} set
\begin{eqnarray*}
\delta\equiv \Pr\left[ |Z-\E[Z]| \geq t\right] \leq 2\exp\left(-\frac{t^2}{2\sum_{k=1}^n{c_k^2}}\right).
\end{eqnarray*}
and solve for $t$ in terms of $\delta$.
If $|Z-\E[Z]| \geq t$ holds with probability at most~$\delta$, then the complementary event 
$|Z-\E[Z]| \leq t$ holds with probability at least~$1-\delta$.
\end{proof}

Thus, if each summand $Z_k$ is close to its mean $\E[Z_k]$,  then 
with high probability, the sum $Z$ is also close to its mean $\E[Z]$.

In the probabilistic perturbation bound below,
the perturbations $\delta_k$ and $\theta_k$ are represented as independent, bounded, zero-mean random variables.

\begin{theorem}\label{t_1}
Let the perturbations $\delta_k,\theta_k$ be independent random variables with 
$\E[\delta_k]=\E[\theta_k]=0$ and $|\delta_k|,|\theta_k|\leq u$, $1\leq k\leq n$.

Then for any $0<\delta<1$, with probability at least $1-\delta$, the relative forward error in the perturbed inner 
product is bounded by
\begin{eqnarray*}
\left|\frac{\vhx^T\vhy-\vx^T\vy}{\vx^T\vy}\right| 
&\leq& \frac{\|\vx\circ\vy\|_2}{|\vx^T\vy|}\>\sqrt{2\,\ln{(2/\delta)}}\> u(2+u)\\
&=& \frac{\sqrt{\sum_{k=1}^n{|x_ky_k|^2}}}{|\vx^T\vy|}\>\sqrt{2\,\ln{(2/\delta)}}\> u(2+u).
\end{eqnarray*}
\end{theorem}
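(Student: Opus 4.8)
The plan is to recognize the forward-error numerator as a sum of independent, zero-mean random variables and then invoke Azuma's inequality (Lemma~\ref{l_1}) directly. The algebraic identity already established in the proof of Theorem~\ref{t_2} supplies
\[
\vhx^T\vhy-\vx^T\vy = \sum_{k=1}^n Z_k, \qquad Z_k \equiv x_ky_k\,(\delta_k+\theta_k+\delta_k\theta_k),
\]
so the numerator of the relative error is exactly the sum $Z \equiv Z_1 + \cdots + Z_n$ to which the concentration inequality applies. The whole argument then amounts to checking the two hypotheses of Lemma~\ref{l_1} and computing the constants.

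First I would verify the hypotheses. For independence, note that each summand $Z_k$ is a function of $\delta_k$ and $\theta_k$ only; since the $2n$ perturbations are mutually independent, the $Z_k$ are independent. For the means, the independence of $\delta_k$ and $\theta_k$ gives $\E[\delta_k\theta_k]=\E[\delta_k]\,\E[\theta_k]=0$, so that $\E[Z_k]=x_ky_k\bigl(\E[\delta_k]+\E[\theta_k]+\E[\delta_k\theta_k]\bigr)=0$, and hence $\E[Z]=0$ as well.

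Next I would produce the per-summand deviation bounds $c_k$. Since $Z_k$ has zero mean, $|Z_k-\E[Z_k]|=|Z_k|$, and the triangle inequality together with $|\delta_k|,|\theta_k|\le u$ yields $|Z_k|\le|x_ky_k|\,(u+u+u^2)=|x_ky_k|\,u(2+u)$, so I set $c_k\equiv|x_ky_k|\,u(2+u)$. Feeding these into Lemma~\ref{l_1} shows that with probability at least $1-\delta$ one has $|Z|\le\sqrt{\sum_{k=1}^n c_k^2}\,\sqrt{2\ln(2/\delta)}$, and factoring the common constant $u(2+u)$ out of the square root gives $\sqrt{\sum_{k=1}^n c_k^2}=u(2+u)\,\|\vx\circ\vy\|_2$. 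Dividing through by $|\vx^T\vy|$ then reproduces the stated bound.

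The calculation is essentially routine once the summands are identified; the only point that needs genuine care---and which I expect to be the crux---is the independence bookkeeping. Specifically, the cross term $\delta_k\theta_k$ must be shown to have zero mean, which is precisely why independence of $\delta_k$ \emph{from} $\theta_k$, and not merely independence across the index~$k$, is required; and one must confirm that the quadratic term does not couple distinct summands, so that the $Z_k$ remain independent. Both facts follow from the mutual-independence hypothesis, and because Azuma's inequality is applied to the single sum $Z$, no union bound over the $n$ terms is incurred---this is exactly what makes the bound tighter than a sum-of-probabilities argument.
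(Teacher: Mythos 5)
Your proposal is correct and follows essentially the same route as the paper's own proof: the same decomposition $Z_k = x_ky_k(\delta_k+\theta_k+\delta_k\theta_k)$, the same deviation constants $c_k = |x_ky_k|\,u(2+u)$, and the same single application of Azuma's inequality (Lemma~\ref{l_1}) followed by division by $|\vx^T\vy|$. Your explicit bookkeeping of which independence is used where (across $k$ for the independence of the $Z_k$, and of $\delta_k$ from $\theta_k$ for $\E[\delta_k\theta_k]=0$) is a slightly more careful rendering of exactly what the paper does.
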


\begin{proof}
Write the total forward error 
\begin{eqnarray*}
Z\equiv \vhx^T\vhy-\vx^T\vy=Z_1 + \cdots + Z_n
\end{eqnarray*}
as a sum of independent random variables, where each summand represents a "local" forward error,
\begin{eqnarray*}
Z_k\equiv x_ky_k\, \left((1+\delta_k)(1+\theta_k)-1\right)=x_ky_k\,\left(\delta_k+\theta_k+\delta_k\theta_k\right), \qquad 1\leq k\leq n.
\end{eqnarray*}
From the linearity of the mean and $\delta_k$, $\theta_k$ being independent random variables with $\E[\delta_k]=\E[\theta_k]=0$
follows
\begin{eqnarray*}
\E[Z_k]=x_ky_k\>\left(\E[\delta_k] +\E[\theta_k] +\E[\delta_k]\E[\theta_k]\right)= 0, \qquad 1\leq k\leq n.
\end{eqnarray*}
The boundedness of $\delta_k$ and $\theta_k$ implies that the deviation of $Z_k$ from its mean $\E[Z_k]=0$ equals
\begin{eqnarray*}\label{e_H1}
\left|Z_k-\E[Z_jk\right| = |Z_k|= |x_ky_k|\,|\delta_k+\theta_k+\delta_k\theta_k|\leq c_k\equiv |x_ky_k|\,\tau, \qquad 1\leq k\leq n,
\end{eqnarray*}
where $\tau\equiv 2u+u^2=u(2+u)$.
Therefore, the conditions of Lemma~\ref{l_1} are satisfied, and we have
\begin{eqnarray*}
\sum_{k=1}^n{c_k^2}=\sum_{k=1}^n{|x_ky_k|^2}\,\tau^2=\|\vx\circ\vy\|_2^2\,\tau^2.
\end{eqnarray*}
The linearity of the expected value implies
\begin{eqnarray*}
\E[\vhx^T\vhy-\vx^T\vy]=\E[Z]=\E[Z_1]+\cdots+\E[Z_n]= 0.
\end{eqnarray*}
Apply Lemma~\ref{l_1} to conclude that  for any $0<\delta<1$, with probability at least $1-\delta$,
\begin{eqnarray*}
\left|\vhx^T\vhy-\vx^T\vy\right| =\left|Z-\E[Z]\right|
\leq \|\vx\circ\vy\|_2\>\sqrt{2\,\ln{(2/\delta)}}\> \tau.
\end{eqnarray*}
At last divide both sides of the inequality by the constant $|\vx^T\vy|$.
\end{proof}

\begin{remark}[Comparsion]\label{r_1}
The probabilistic bound in Theorem~\ref{t_1} is by a factor of $\sqrt{n}$ tighter than the deterministic
bound (\ref{e_pb2}) in Corollary~\ref{c_2}.

The probabilistic bound in Theorem~\ref{t_1} holds with probability at least $1-\delta$,
\begin{eqnarray*}
\left|\frac{\vhx^T\vhy-\vx^T\vy}{\vx^T\vy}\right| \leq \frac{\|\vx\circ\vy\|_2}{|\vx^T\vy|}
\>\sqrt{2\,\ln{(2/\delta)}}\> u(2+u),
\end{eqnarray*}
while the deterministic bound~(\ref{e_pb2}) equals
\begin{eqnarray*}
\left|\frac{\vhx^T\vhy-\vx^T\vy}{\vx^T\vy}\right| \leq \frac{\|\vx\circ\vy\|_2}{|\vx^T\vy|}
\>\sqrt{n}\>u(2+u).
\end{eqnarray*}
The two bounds differ in the factors $\sqrt{2\,\ln{(2/\delta)}}$ versus $\sqrt{n}$, which implies:
\begin{enumerate}
\item The deterministic bound depends explicitly on the dimension~$n$, 
while the probabilistic bound does not.
\item The probabilistic bound is tighter than the deterministic bound for $n>2 \ln{(2/\delta)}$. 
Specifically, with a tiny failure probability of $\delta=10^{-16}$, the probabilistic bound is tighter 
for $n>76$, and $\sqrt{2\,\ln{(2/\delta)}}\leq 9$.
\end{enumerate}
The numerical experiments in Section~\ref{s_perturbcomp} illustrate that the probabilistic bound tends to be
at least two orders of magnitude tighter than the deterministic bound.
\end{remark}

\begin{example}\label{ex_1}
We illustrate the behaviour of the amplifier
\begin{eqnarray*}
\kappa_2\equiv \|\vx\circ\vy\|_2/|\vx^T\vy|
\end{eqnarray*}
in the probabilistic bound in Theorem~\ref{t_1} with three very special cases.

\begin{enumerate}
\item \textit{No cancellation:}\\
If all $x_ky_k$ have the same sign, then $\kappa_2^2=
\tfrac{\sum_{k=1}^n{|x_ky_k|^2}}{\left(\sum_{k=1}^n{|x_ky_k|}\right)^2}\leq 1$, so that
\begin{eqnarray*}
\left|\frac{\vhx^T\vhy-\vx^T\vy}{\vx^T\vy}\right| \leq \sqrt{2\,\ln{(2/\delta)}}\> u(2+u).
\end{eqnarray*}
If also $x_ky_k=w\neq 0$ for $1\leq k\leq n$, then $\kappa_2^2=\tfrac{nw^2}{(nw)^2}=\tfrac{1}{n}$, so that
$\kappa_2$ decreases with increasing dimension~$n$,
\begin{eqnarray*}
\left|\frac{\vhx^T\vhy-\vx^T\vy}{\vx^T\vy}\right| \leq \sqrt{\frac{2\,\ln{(2/\delta)}}{n}}\> u(2+u).
\end{eqnarray*}
\item \textit{Severe cancellation:}\\
If $x_ky_k=(-1)^k w$ for $1\leq k\leq n$, $w\neq 0$, and $n$ is odd, then $\kappa_2^2=\tfrac{nw^2}{w^2}=n$, so that
$\kappa_2$ increases with increasing dimension~$n$,
\begin{eqnarray*}
\left|\frac{\vhx^T\vhy-\vx^T\vy}{\vx^T\vy}\right| \leq \sqrt{n}\>\sqrt{2\,\ln{(2/\delta)}}\,u(2+u).
\end{eqnarray*}
\end{enumerate}
\end{example}
\medskip

\section{Probabilistic roundoff error bound, assuming independence of roundoff}\label{s_roundoff1}
After presenting the model for independent roundoffs (Section~\ref{s_model1}), 
we derive a motivating deterministic bound
(Section~\ref{s_motdet1}), followed by the  probabilistic bound (Section~\ref{s_prob1}).

\subsection{Roundoff error model}\label{s_model1}
We assume that the elements of $\vx$ and $\vy$ are floating point numbers, and can be stored exactly.
The inner product is computed via recursive summation \cite[Section 4.1]{Higham2002}, by accumulating partial sums
sequentially from left to right,
\begin{eqnarray*}
z_1=x_1y_1, \qquad z_{k+1} =\sum_{j=1}^{k+1}{x_jy_j}, \qquad 1\leq k\leq n-1.
\end{eqnarray*}
The roundoff error model in Table~\ref{tab_fpt} corresponds to  \cite[(3.1) and (3.2)]{Higham2002}.

\begin{table}[tbhp]
\caption{Traditional roundoff error model (guard digits, no fused multiply-add)}\label{tab_fpt}
\begin{center}
\begin{tabular}{|| l | l |}\hline
Floating point arithmetic$\qquad\qquad\quad$  & Exact computation \\
\hline\hline
$\hz_1  =  x_1y_1\,(1+\theta_1)$ & $z_1=x_1y_1$         \\
$\hz_{k+1} = \left(\hz_{k}+x_{k+1}y_{k+1}\,(1+\theta_{k+1})\right)(1+\delta_{k+1})$ & $z_{k+1} = z_{k}+x_{k+1}y_{k+1}$ \\
\hline
$\hz_{n}=\fl(\vx^T\vy)$ & $z_{n}=\vx^T\vy$\\
\hline 
\end{tabular}
\end{center}
\end{table}

For $0<u< 1$ and $k\geq 1$,  we use the abbreviation
\begin{eqnarray}\label{e_gamma}
\gamma_k &\equiv& (1+u)^k-1 =ku+ \mathcal{O}(u^2).
\end{eqnarray}
If $ku<1$ then  \cite[Lemma 3.1]{Higham2002}
\begin{eqnarray*}
\gamma_k\leq  \frac{ku}{1-ku}.
\end{eqnarray*}

\subsection{A motivating deterministic bound}\label{s_motdet1}
First we unravel the expressions for the computed partial sums, and then bound the sums in terms of inputs and the roundoffs.

\begin{lemma}
The partial sums in Table~\ref{tab_fpt} are equal to
\begin{eqnarray*}
\hz_1 &= & x_1 y_1\,(1+\theta_1)\\
\hz_k & =& x_1y_1\,(1+\theta_1)\,\prod_{\ell=2}^k{(1+\delta_{\ell})}+ \sum_{j=2}^k{x_jy_j\,(1+\theta_j)\,\prod_{\ell=j}^k{(1+\delta_{\ell})}},
\qquad 2\leq k\leq n.
\end{eqnarray*}
If $|\delta_k|, |\theta_k|\leq u$, $1\leq k\leq n$, then the partial sums are bounded by
\begin{eqnarray*}
|\hz_1| & \leq & |x_1y_1|\,(1+u)\\
|\hz_k| & \leq & |x_1y_1|\, (1+u)^k + \sum_{j=2}^k{|x_jy_j|\,(1+u)^{k-j+2}}, \qquad 2\leq k\leq n.
\end{eqnarray*}
\end{lemma}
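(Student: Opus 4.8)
The plan is to establish the closed-form expression for $\hz_k$ by induction on~$k$, and then obtain the stated upper bounds by taking absolute values and applying the triangle inequality together with $|1+\delta_\ell|,\,|1+\theta_j|\leq 1+u$.

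First I would treat the exact formula. The value $\hz_1=x_1y_1(1+\theta_1)$ is immediate from the first line of Table~\ref{tab_fpt}. For the general expression I would induct on~$k$ starting at $k=2$: the base case follows by multiplying out $\hz_2=(\hz_1+x_2y_2(1+\theta_2))(1+\delta_2)$ using the value of $\hz_1$. For the inductive step I would assume the formula holds at index~$k$ and substitute it into the recursion $\hz_{k+1}=(\hz_k+x_{k+1}y_{k+1}(1+\theta_{k+1}))(1+\delta_{k+1})$. Distributing the factor $(1+\delta_{k+1})$ raises the upper limit of every $\delta$-product from $\prod_{\ell=j}^{k}(1+\delta_\ell)$ to $\prod_{\ell=j}^{k+1}(1+\delta_\ell)$, both in the leading term and in each summand. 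The freshly introduced contribution $x_{k+1}y_{k+1}(1+\theta_{k+1})(1+\delta_{k+1})$ is then exactly the $j=k+1$ term of the sum, since the general summand at $j=k+1$ carries the single-factor $\delta$-product $\prod_{\ell=k+1}^{k+1}(1+\delta_\ell)=(1+\delta_{k+1})$. This extends the summation range from $2\leq j\leq k$ to $2\leq j\leq k+1$ and reproduces the formula at index $k+1$.

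For the bounds I would take absolute values in the exact formula and bound each $|1+\delta_\ell|\leq 1+u$ and $|1+\theta_j|\leq 1+u$. In the leading term of $\hz_k$ the single $\theta$-factor combines with the $k-1$ factors of $\prod_{\ell=2}^{k}(1+\delta_\ell)$, giving the exponent $1+(k-1)=k$ and hence $|x_1y_1|(1+u)^k$. For the $j$-th summand, the product $\prod_{\ell=j}^{k}(1+\delta_\ell)$ contains $k-j+1$ factors, which together with the single $\theta$-factor yields the exponent $(k-j+1)+1=k-j+2$, hence $|x_jy_j|(1+u)^{k-j+2}$. Summing these and treating $\hz_1$ separately gives both stated bounds.

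I expect the only delicate point to be the index bookkeeping in the inductive step, specifically recognizing that multiplication by $(1+\delta_{k+1})$ uniformly advances the upper limit of each $\delta$-product from $k$ to $k+1$, and that the newly introduced term slots in precisely as the $j=k+1$ summand. Once the product limits and factor counts are tracked correctly, both the exact formula and the exponents $k$ and $k-j+2$ in the bound follow mechanically.
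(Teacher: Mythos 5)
Your proof is correct, and it follows the standard route: the paper itself states this lemma without proof (deferring to the analogy with Higham's Lemma 3.1, which is exactly this unrolling of the recursion), and your induction, with the bound obtained by counting the $1+u$ factors, is precisely the argument being left implicit. The index bookkeeping in your inductive step and the exponents $k$ and $k-j+2$ all check out.
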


\begin{lemma}\label{l_fpt2}
The total forward error for the computed inner product $\hz_n=\fl(\vx^T\vy)$  in Table~\ref{tab_fpt}
is expressed as a sum of "local forward errors", 
\begin{eqnarray*}
\fl(\vx^T\vy)-\vx^T\vy=\hz_n-z_n=Z_1+\cdots + Z_n,
\end{eqnarray*}
with a local forward error for each summand,
\begin{eqnarray*}
Z_1 &\equiv & x_1y_1\,\left((1+\theta_1)\,\prod_{\ell=2}^n{(1+\delta_{\ell})}-1\right)\\
Z_k & \equiv & x_ky_k\,\left((1+\theta_k)\,\prod_{\ell=k}^n{(1+\delta_{\ell})}-1\right),
\qquad 2\leq k\leq n
\end{eqnarray*}
If $|\delta_k|, |\theta_k|\leq u$, $1\leq k\leq n$, and $\gamma_k$ as in (\ref{e_gamma}), then 
\begin{eqnarray*}
|Z_1| &\leq & c_1\equiv |x_1y_1|\,\gamma_n\\
|Z_k| &\leq  & c_k\equiv |x_ky_k|\, \gamma_{n-k+2}, \qquad 2\leq k\leq n.
\end{eqnarray*}
\end{lemma}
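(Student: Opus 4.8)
The plan is to establish the exact decomposition first and then bound each summand separately. For the exact expression, I would invoke the preceding lemma at $k=n$, which writes $\hz_n$ as the single product term $x_1y_1\,(1+\theta_1)\prod_{\ell=2}^n(1+\delta_\ell)$ plus a sum over $j=2,\ldots,n$ of the product terms $x_jy_j\,(1+\theta_j)\prod_{\ell=j}^n(1+\delta_\ell)$. Since $z_n=\vx^T\vy=\sum_{k=1}^n x_ky_k$, I would subtract $z_n$ from $\hz_n$ and match, term by term, the $x_1y_1$ product against the $k=1$ summand of $z_n$ and each $x_jy_j$ product against the corresponding $x_jy_j$. Factoring out the common $x_ky_k$ and the trailing $-1$ then regroups the difference into exactly $Z_1+\cdots+Z_n$ with the stated $Z_k$. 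This step uses no inequalities, only the distributive law, so it is pure bookkeeping.

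For the bounds, the quantity to control is $\bigl|(1+\theta_k)\prod_{\ell=k}^n(1+\delta_\ell)-1\bigr|$, the deviation from one of a product of error factors. I would use the standard fact (cf.\ \cite{Higham2002}) that a product of $m$ factors, each of the form $1+\epsilon$ with $|\epsilon|\le u$, satisfies $\bigl|\prod_{i=1}^m(1+\epsilon_i)-1\bigr|\le (1+u)^m-1=\gamma_m$. The only real content is then counting factors correctly. For $Z_1$ the bracket is the one factor $(1+\theta_1)$ times the $n-1$ factors $\prod_{\ell=2}^n(1+\delta_\ell)$, hence $n$ factors in total and the bound $\gamma_n$. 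For $Z_k$ with $k\ge 2$ the bracket has the one factor $(1+\theta_k)$ together with the $n-k+1$ factors $\prod_{\ell=k}^n(1+\delta_\ell)$, giving $n-k+2$ factors and the bound $\gamma_{n-k+2}$. Multiplying through by $|x_ky_k|$ yields the asserted $c_1$ and $c_k$.

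The main obstacle---such as it is---is entirely at the level of careful accounting, in two respects. First, the factor count must be consistent so that the index on $\gamma$ is correct; in particular the $\theta$-factor must be included, which is why $Z_k$ carries $\gamma_{n-k+2}$ rather than $\gamma_{n-k+1}$, and why the first summand $Z_1$ carries the largest index $\gamma_n$ (it accumulates every multiplicative rounding $\delta_2,\ldots,\delta_n$). Second, in justifying the product bound one should note that it is genuinely two-sided: the product lies between $(1-u)^m$ and $(1+u)^m$, and although $(1-u)^m-1$ is negative, its magnitude is dominated by $(1+u)^m-1$ because $(1+u)^m+(1-u)^m\ge 2$, as seen by expanding both binomials and discarding the nonnegative even-order terms. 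Everything else is routine.
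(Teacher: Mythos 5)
Your proof is correct and is essentially the argument the paper intends: the paper's own ``proof'' is a one-line appeal to the analogy with \cite[Lemma 3.1]{Higham2002}, and your argument---evaluating the preceding lemma's expansion at $k=n$, subtracting $z_n$ term by term, and bounding each bracket of $m$ error factors by $\gamma_m=(1+u)^m-1$ with the correct counts ($m=n$ for $Z_1$, $m=n-k+2$ for $Z_k$)---is exactly what that citation compresses. Your two-sided justification, that $1-(1-u)^m\le (1+u)^m-1$ because $(1+u)^m+(1-u)^m\ge 2$, is a correct detail the paper leaves implicit.
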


\begin{proof} 
This is analogous to \cite[Lemma 3.1]{Higham2002}.
\end{proof}

Now we can bound the total forward error.

\begin{theorem}
Let  the roundoffs satisfy $|\delta_k|, |\theta_k|\leq u$, $1\leq k\leq n$, with $\gamma_k$ as in~(\ref{e_gamma}).

Then the forward error of the computed inner product $\hz_n=\fl(\vx^T\vy)$ in Table~\ref{tab_fpt} is bounded by
\begin{eqnarray*}
\left|\fl(\vx^T\vy)-\vx^T\vy\right| =\left|\hz_{n}-z_{n}\right|
\leq  \sum_{k=1}^{n}{c_k} =|x_1y_1|\,\gamma_n +\sum_{k=2}^n{|x_ky_k|\,\gamma_{n-k+2}}.
\end{eqnarray*}
\end{theorem}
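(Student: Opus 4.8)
The plan is to read the result off directly from Lemma~\ref{l_fpt2}, which has already carried out all of the substantive work. That lemma expresses the total forward error as a sum of local forward errors,
\begin{equation*}
\fl(\vx^T\vy)-\vx^T\vy=\hz_n-z_n=Z_1+\cdots+Z_n,
\end{equation*}
and supplies the per-summand deviation bounds $|Z_1|\leq c_1=|x_1y_1|\,\gamma_n$ together with $|Z_k|\leq c_k=|x_ky_k|\,\gamma_{n-k+2}$ for $2\leq k\leq n$.

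Given this decomposition, the only remaining step is an application of the triangle inequality to the sum of local errors,
\begin{equation*}
\left|\hz_n-z_n\right|=\left|Z_1+\cdots+Z_n\right|\leq |Z_1|+\cdots+|Z_n|.
\end{equation*}
Substituting the bounds $|Z_k|\leq c_k$ and collecting terms then yields
\begin{equation*}
\left|\hz_n-z_n\right|\leq \sum_{k=1}^n c_k = |x_1y_1|\,\gamma_n+\sum_{k=2}^n |x_ky_k|\,\gamma_{n-k+2},
\end{equation*}
which is exactly the claimed bound.

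There is essentially no obstacle at the level of this theorem: the genuinely delicate part is precisely what Lemma~\ref{l_fpt2} establishes, namely unraveling the recursion for~$\hz_k$, identifying the product of rounding factors that accumulates in each local error $Z_k$, and bounding its magnitude by~$c_k$ through the $\gamma$ notation of~(\ref{e_gamma}). If I were to reprove that lemma, the one point requiring care would be counting the rounding factors attached to each term: the summand $x_jy_j$ for $j\geq 2$ carries the multiplicative rounding $(1+\theta_j)$ and is then rescaled by the roundings $\delta_j,\delta_{j+1},\ldots,\delta_n$ arising from the additions at steps $j$ through~$n$, giving $n-j+1$ factors of $(1+\delta)$ and one factor of $(1+\theta_j)$, hence the exponent $n-j+2$ in $\gamma_{n-j+2}$; whereas the leading term $x_1y_1$ receives no initial $\delta_1$ and so is governed by $\gamma_n$ rather than $\gamma_{n+1}$.
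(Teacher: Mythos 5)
Your proof is correct and matches the paper's own argument exactly: both invoke Lemma~\ref{l_fpt2} for the decomposition $\hz_n-z_n=Z_1+\cdots+Z_n$ together with the bounds $|Z_k|\leq c_k$, and then apply the triangle inequality. Your closing remarks on the factor counting (one $(1+\theta_j)$ plus $n-j+1$ addition factors, and the absence of a $\delta_1$ on the leading term) are also accurate and correctly explain the exponents $\gamma_{n-j+2}$ and $\gamma_n$.
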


\begin{proof}
Applying the triangle inequality to the total forward error in Lemma~\ref{l_fpt2} gives
\begin{eqnarray*}
\left|\hz_{n}-z_{n}\right|\leq \sum_{k=1}^n{|Z_k|} \leq \sum_{k=1}^n{c_k}.
\end{eqnarray*}
$\quad$
\end{proof}

The first consequence is the traditional forward error bound \cite[Section 3.1]{Higham2002}.

\begin{corollary}[Traditional bound]\label{c_trad}
Let  the roundoffs satisfy $|\delta_k|, |\theta_k|\leq u$, $1\leq k\leq n$, with $\gamma_k$ as in~(\ref{e_gamma}).

Then the relative forward error of the computed inner product $\hz_n=\fl(\vx^T\vy)$ in Table~\ref{tab_fpt} is bounded by
\begin{eqnarray*}
\left|\frac{\fl(\vx^T\vy)-\vx^T\vy}{|\vx^T\vy|}\right| &\leq& \frac{|\vx|^T|\vy|}{|\vx^T\vy|}\,\gamma_n.
\end{eqnarray*}
\end{corollary}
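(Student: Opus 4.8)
The plan is to derive this relative bound as a direct coarsening of the deterministic forward error bound established in the preceding theorem, which already expresses the absolute error as $\sum_{k=1}^n c_k = |x_1y_1|\,\gamma_n + \sum_{k=2}^n |x_ky_k|\,\gamma_{n-k+2}$. The only work is to replace the family of summand-dependent amplifiers $\gamma_{n-k+2}$ by the single worst-case amplifier $\gamma_n$, and then to identify the resulting weighted sum.

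First I would note that $\gamma_m = (1+u)^m - 1$ is monotonically increasing in its index $m$, since $u>0$ makes $(1+u)^m$ grow with $m$. As $k$ ranges over $2\leq k\leq n$, the index $n-k+2$ takes every value between $2$ and $n$, so each factor obeys $\gamma_{n-k+2}\leq\gamma_n$. Substituting this uniform bound into the theorem and factoring out $\gamma_n$ yields
\begin{eqnarray*}
\left|\hz_n - z_n\right| &\leq& \gamma_n\left(|x_1y_1| + \sum_{k=2}^n |x_ky_k|\right) = \gamma_n \sum_{k=1}^n |x_ky_k|.
\end{eqnarray*}

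Finally I would recognize the remaining sum, exactly as in the proof of Corollary~\ref{c_2}, as $\sum_{k=1}^n |x_ky_k| = \sum_{k=1}^n |x_k|\,|y_k| = |\vx|^T|\vy|$, and then divide both sides by the constant $|\vx^T\vy|$ to obtain the stated relative bound. There is no genuine obstacle here; the single point meriting explicit justification is the monotonicity of $\gamma_m$ in $m$, which is what collapses the $n-1$ distinct per-summand amplifiers into the one classical factor $\gamma_n$ and recovers Higham's standard $\mathcal{O}(nu)$ estimate.
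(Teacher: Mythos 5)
Your proof is correct and is essentially the same as the paper's: the paper performs the identical coarsening step, merely packaged as a H\"{o}lder inequality $\vv^T\vg\leq\|\vv\|_1\,\|\vg\|_{\infty}$ with $\vv$ the vector of $|x_ky_k|$ and $\vg$ the vector of amplifiers $(\gamma_n,\gamma_n,\gamma_{n-1},\ldots,\gamma_2)$, whose $\infty$-norm equals $\gamma_n$ by exactly the monotonicity of $\gamma_m$ that you invoke explicitly. Replacing each $\gamma_{n-k+2}$ by the maximum $\gamma_n$ and factoring it out is the same computation in more elementary language, so nothing is missing.
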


\begin{proof} 
Define the vectors
\begin{eqnarray*}
\vv\equiv \begin{pmatrix} |x_1y_1| & \cdots  &|x_ny_n|\end{pmatrix}^T, \qquad 
\vg \equiv \begin{pmatrix} \gamma_n & \gamma_n & \gamma_{n-1}& \cdots & \gamma_2\end{pmatrix}^T,
\end{eqnarray*}
and apply the H\"{o}lder inequality to
\begin{eqnarray*}
\sum_{k=1}^n{c_k} =\vv^T\vg\leq \|\vv\|_1\,\|\vg\|_{\infty}=\sum_{k=1}^n{|x_ky_k|}\>\gamma_n=|\vx|^T|\vy|\>\gamma_n.
\end{eqnarray*}
\end{proof}

The second consequence is the motivation for the probabilistic bound to follow.

\begin{corollary}[Deterministic version of Theorem~\ref{t_04}]\label{c_04}
Let the roundoffs satisfy $|\delta_k|, |\theta_k|\leq u$, $1\leq k\leq n$, with $\gamma_k$ as in (\ref{e_gamma}).

Then the relative forward error of the computed inner product $\hz_n=\fl(\vx^T\vy)$ in Table~\ref{tab_fpt} is bounded by
\begin{eqnarray*}
\left|\frac{\fl(\vx^T\vy)-\vx^T\vy}{|\vx^T\vy|}\right| &\leq& \frac{\sqrt{\sum_{k=1}^n{c_k^2}}}{|\vx^T\vy|}\>\sqrt{n}
\end{eqnarray*}
where $c_1\equiv |x_1y_1|\,\gamma_n$, and $c_k\equiv |x_ky_k|\, \gamma_{n-k+2}$, $2\leq k\leq n$.
\end{corollary}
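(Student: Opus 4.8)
The plan is to start from the deterministic forward-error bound already established in the Theorem immediately preceding Corollary~\ref{c_trad}, namely $\left|\hz_n-z_n\right|\leq\sum_{k=1}^n c_k$, and then merely rewrite the right-hand side in terms of the Euclidean norm of the vector of coefficients $c_k$. The only analytic ingredient is the standard inequality relating the $1$-norm and the $2$-norm of a vector, which is itself the Cauchy--Schwarz inequality (equivalently, the H\"older inequality with $p=q=2$).

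Concretely, I would first recall from Lemma~\ref{l_fpt2} together with the triangle inequality that
\[
\left|\hz_n-z_n\right|\;\leq\;\sum_{k=1}^n|Z_k|\;\leq\;\sum_{k=1}^n c_k,
\]
with $c_1=|x_1y_1|\,\gamma_n$ and $c_k=|x_ky_k|\,\gamma_{n-k+2}$ for $2\leq k\leq n$. Next I would pair the all-ones vector with the vector whose entries are the $c_k$ and apply Cauchy--Schwarz,
\[
\sum_{k=1}^n c_k\;=\;\sum_{k=1}^n 1\cdot c_k\;\leq\;\sqrt{\sum_{k=1}^n 1^2}\;\sqrt{\sum_{k=1}^n c_k^2}\;=\;\sqrt{n}\;\sqrt{\sum_{k=1}^n c_k^2}.
\]
Combining the two displays yields $\left|\hz_n-z_n\right|\leq\sqrt{n}\,\sqrt{\sum_{k=1}^n c_k^2}$, and dividing both sides by the constant $|\vx^T\vy|$ gives exactly the claimed relative error bound.

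There is no genuine obstacle in this argument: the corollary is a one-line consequence of the preceding theorem and the norm equivalence $\|\cdot\|_1\leq\sqrt{n}\,\|\cdot\|_2$. The only point worth flagging is the \emph{direction} of the inequality, since one pays a factor of $\sqrt{n}$ rather than saving one, so this deterministic bound is in fact weaker than the traditional bound of Corollary~\ref{c_trad}. Its role is not to sharpen the deterministic estimate but to expose the Euclidean-norm quantity $\sqrt{\sum_{k=1}^n c_k^2}$, which is precisely the scale factor that will reappear---now stripped of the spurious $\sqrt{n}$---in the probabilistic bound of Theorem~\ref{t_04}.
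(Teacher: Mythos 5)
Your proof is correct and follows essentially the same route as the paper: invoke the preceding theorem's bound $\left|\hz_n-z_n\right|\leq\sum_{k=1}^n c_k$ (itself from Lemma~\ref{l_fpt2} and the triangle inequality), apply the norm relation $\|\vc\|_1\leq\sqrt{n}\,\|\vc\|_2$ (which you justify via Cauchy--Schwarz with the all-ones vector), and divide by $|\vx^T\vy|$. Your closing remark on the purpose of the corollary---exposing the Euclidean-norm quantity that reappears without the $\sqrt{n}$ in Theorem~\ref{t_04}---matches the paper's stated motivation exactly.
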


\begin{proof}
Define the non-negative vector
$\ \vc\equiv \begin{pmatrix} c_1 & \cdots & c_n\end{pmatrix}^T$
and use the relation between vector norms
\begin{eqnarray*}
\sum_{k=1}^n{c_k}=\|\vc\|_1\leq \|\vc\|_2\>\sqrt{n} =\sqrt{\sum_{k=1}^n{c_k^2}}\>\sqrt{n}.
\end{eqnarray*}
$\quad$
\end{proof}

\subsection{Probabilistic forward error bound}\label{s_prob1}
Since the roundoffs are independent, bounded zero-mean random variables, we can use Azuma's inequality in Lemma~\ref{l_1}.

\begin{theorem}\label{t_04}
Let the roundoffs $\delta_k, \theta_k$ be independent random variables
with $\E[\delta_k]=\E[\theta_k]=0$ and $|\delta_k|, |\theta_k|\leq u$, $1\leq k\leq n$, and let $\gamma_k$ as in (\ref{e_gamma}).

Then for any $0<\delta<1$, with probability at least $1-\delta$, the relative forward error in 
the computed inner product $\hz_n=\fl(\vx^T\vy)$ in Table~\ref{tab_fpt} is bounded by
\begin{eqnarray*}
\left|\frac{\fl(\vx^T\vy)-\vx^T\vy}{\vx^T\vy}\right|=\left|\frac{\hz_{n}-z_{n}}{z_{n}}\right| 
\leq \frac{\sqrt{\sum_{k=1}^{n}{c_k^2}}}{|\vx^T\vy|}\>\sqrt{2\,\ln{(2/\delta)}},
\end{eqnarray*}
where $c_1\equiv |x_1y_1|\,\gamma_n$, and $c_k\equiv |x_ky_k|\, \gamma_{n-k+2}$, $2\leq k\leq n$.
\end{theorem}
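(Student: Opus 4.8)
The plan is to follow the template of the proof of Theorem~\ref{t_1}, replacing the simple per-component errors there with the accumulated local errors of Lemma~\ref{l_fpt2}. First I would invoke Lemma~\ref{l_fpt2} to write the total forward error as the sum
$$Z \equiv \hz_n - z_n = Z_1 + \cdots + Z_n,$$
with $Z_1 = x_1 y_1\bigl((1+\theta_1)\prod_{\ell=2}^n(1+\delta_\ell)-1\bigr)$ and $Z_k = x_k y_k\bigl((1+\theta_k)\prod_{\ell=k}^n(1+\delta_\ell)-1\bigr)$ for $2 \leq k \leq n$, and with the deviation bounds $|Z_k| \leq c_k$ already supplied by that lemma. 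This reduces the task to verifying the two hypotheses of Lemma~\ref{l_1} and reading off the conclusion.

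The mean-zero property is where the independence assumption pays off: since the $\delta_\ell$ and $\theta_k$ are independent with zero mean, the expectation of the tail product factorizes,
$$\E\Bigl[(1+\theta_k)\prod_{\ell=k}^n(1+\delta_\ell)\Bigr] = \E[1+\theta_k]\prod_{\ell=k}^n\E[1+\delta_\ell] = 1,$$
so that $\E[Z_k] = x_k y_k(1-1) = 0$ and hence $\E[Z] = 0$. The boundedness hypothesis $|Z_k - \E[Z_k]| = |Z_k| \leq c_k$ is exactly the estimate of Lemma~\ref{l_fpt2}, producing $\sum_k c_k^2$ as the sum-of-squares in Azuma's inequality. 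Applying Lemma~\ref{l_1} then yields, with probability at least $1-\delta$,
$$|\hz_n - z_n| = |Z - \E[Z]| \leq \sqrt{\textstyle\sum_{k=1}^n c_k^2}\,\sqrt{2\ln(2/\delta)},$$
and dividing both sides by the constant $|\vx^T\vy|$ gives the stated bound, in perfect parallel with the deterministic Corollary~\ref{c_04} but with $\sqrt{n}$ replaced by $\sqrt{2\ln(2/\delta)}$.

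The step I expect to be the main obstacle is the application of Lemma~\ref{l_1} itself, because that lemma requires the summands $Z_1,\ldots,Z_n$ to be \emph{independent}, whereas here they are not: each $Z_k$ carries the tail product $\prod_{\ell=k}^n(1+\delta_\ell)$, so any two of them share the common factors $\delta_{\max(j,k)},\ldots,\delta_n$ and are therefore genuinely dependent. Independence of the underlying roundoffs does not transfer to independence of the local forward errors. Thus, while the mean and boundedness computations go through cleanly, invoking Lemma~\ref{l_1} as literally stated is the delicate point of the argument. I would expect this to be exactly the place where the bound can fail in practice -- consistent with the paper's warning that the expression ceases to be an upper bound for non-negative vectors of large~$n$ -- and presumably the reason Section~\ref{s_roundoff2} relinquishes independence in favour of a martingale that respects the sequential order in which the $\delta_\ell$ enter the computation.
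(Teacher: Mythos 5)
Your write-up is, step for step, the paper's own proof: the same decomposition of $\fl(\vx^T\vy)-\vx^T\vy$ into the local errors $Z_k$ of Lemma~\ref{l_fpt2}, the same zero-mean computation via factorization of expectations, the same bounds $|Z_k|\leq c_k$, and the same final application of Lemma~\ref{l_1} followed by division by the constant $|\vx^T\vy|$.

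The obstacle you flag at the end is, however, a genuine gap --- and it is a gap in the paper's proof as well, not just in yours. Lemma~\ref{l_1} requires the summands $Z_1,\ldots,Z_n$ to be independent, and they are not: $Z_j$ and $Z_k$ both contain the factors $\delta_{\max(j,k)},\ldots,\delta_n$, so independence of the underlying roundoffs does not transfer to independence of the local errors, exactly as you observe. The paper glosses over this with the sentence ``Since the roundoffs are independent random variables, so is the total forward error,'' which is a non sequitur and does not establish the hypothesis of Lemma~\ref{l_1}. The repair is also not cosmetic: if one fixes the argument with the standard tool for dependent summands --- a martingale ordered by the computation together with the Azuma--Hoeffding inequality of Lemma~\ref{l_4} --- the bounded differences that emerge are those attached to the $2n-1$ individual operations, so one is led to the constants of Theorem~\ref{t_4} in Section~\ref{s_roundoff2} rather than to the constants $c_1=|x_1y_1|\,\gamma_n$, $c_k=|x_ky_k|\,\gamma_{n-k+2}$ claimed here. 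In short, your reconstruction is faithful to the published argument, gap included, and your diagnosis of where it breaks is more candid than the paper's own treatment; that flaw is precisely why the independence assumption is relinquished in Section~\ref{s_roundoff2} in favour of the martingale formulation.
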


\begin{proof} 
Since the roundoffs are independent random variables, so is 
the total forward error in Lemma~\ref{l_fpt2},
\begin{eqnarray*}
Z\equiv Z_1+\cdots + Z_n=\fl(\vx^T\vy)-\vx^T\vy.
\end{eqnarray*}
The random variables
\begin{eqnarray*}
Z_1 &\equiv & x_1y_1\,\left((1+\theta_1)\,\prod_{\ell=2}^n{(1+\delta_{\ell})}-1\right)\\
Z_k & \equiv & x_ky_k\,\left((1+\theta_k)\,\prod_{\ell=k}^n{(1+\delta_{\ell})}-1\right),
\qquad 2\leq k\leq n,
\end{eqnarray*}
represent the local forward errors and have zero mean, $\E[Z_k]=0$. 
By linearity, the total forward error has zero mean as well, 
\begin{eqnarray*}
\E[Z]=\E\left[Z_1+\cdots + Z_n\right]=\E[Z_1]+\cdots +\E[Z_n]=0.
\end{eqnarray*}
The deviations of the local errors from their means are bounded by
\begin{eqnarray*}
|Z_k-\E[Z_k]|=|Z_k|\leq c_k, \qquad 1\leq k\leq n,
\end{eqnarray*}
with $c_k$ as in Lemma~\ref{l_fpt2}. Thus we can apply Lemma~\ref{l_1} to $Z$, and then
divide both sides by the constant $|\vx^T\vy|$.
\end{proof}

\begin{remark}[Comparison]\label{r_3}
The probabilistic bound in Theorem~\ref{t_04} tends to be tighter than the corresponding deterministic
bound in Corollary~\ref{c_04}.

The probabilistic bound in Theorem~\ref{t_04} holds with probability at least $1-\delta$,
\begin{eqnarray*}
\left|\frac{\vhx^T\vhy-\vx^T\vy}{\vx^T\vy}\right| \leq \frac{\sqrt{\sum_{k=1}^n{c_k^2}}}{|\vx^T\vy|} \>\sqrt{2\,\ln{(2/\delta)}},
\end{eqnarray*}
while the deterministic bound in Corollary~\ref{c_04}  equals
\begin{eqnarray*}
\left|\frac{\vhx^T\vhy-\vx^T\vy}{\vx^T\vy}\right| \leq \frac{\sqrt{\sum_{k=1}^n{c_k^2}}}{|\vx^T\vy|} \>\sqrt{n},
\end{eqnarray*}
where $c_1\equiv |x_1y_1|\,\gamma_n$, and $c_k\equiv |x_ky_k|\, \gamma_{n-k+2}$, $2\leq k\leq n$,
with $\gamma_k$ as in (\ref{e_gamma}).

As in Remark~\ref{r_1}, the two bounds differ in the factors $\sqrt{2\,\ln{(2/\delta)}}$ versus $\sqrt{n}$, which implies:
\begin{enumerate}
\item The deterministic bound depends explicitly on the dimension~$n$, 
while the probabilistic bound does not.
\item The probabilistic bound is tighter than the deterministic bound for $n>2 \ln{(2/\delta)}$. 
Specifically, with a tiny failure probability of $\delta=10^{-16}$, the probabilistic bound is tighter 
for $n>76$, and $\sqrt{2\,\ln{(2/\delta)}}\leq 9$.
\end{enumerate}
The numerical experiments in Section~\ref{s_robi1}
illustrate that the probabilistic expression can be as much as two orders of magnitude tighter then 
the deterministic bound, but stops being an upper bound for non-negative vectors of large dimension.  
\end{remark}
\section{General probabilistic roundoff error bound}\label{s_roundoff2}

In contrast to the previous section, we make no assumptions on the independence of roundoffs.
After presenting the roundoff error model (Section~\ref{s_remodel2}), we derive a motivating deterministic bound
(Section~\ref{s_motdet2}), and then present the probabilistic bound (Section~\ref{s_prob2}), followed by two
upper bounds that take a simpler form (Section~\ref{s_uppersimpler}).

\subsection{Roundoff error model}\label{s_remodel2}
As in Section~\ref{s_model1}, we assume that the elements of $\vx$ and $\vy$ are floating point numbers, and 
can be stored exactly. 
Our model in Table~\ref{tab_fp} differs from the traditional model in Table~\ref{tab_fpt} only in the book keeping.
It distinguishes each step that introduces a roundoff, 
and explicitly separates additions~($+$) from multiplications~($*$). There are $n$ multiplications and $n-1$
additions, so  $2n-1$ distinct roundoffs. 

The model in Table~\ref{tab_fp} is designed 
to do without additional intermediate factors like $x_ky_k(1+\delta_{2k-2})$, and is expressed solely in terms of partial sums.
Since we assume a guard digit model without fused multiply-add,
the roundoff for addition can be recorded in a subsequent step. The very first partial sum
incurs no addition, so we allocate the roundoff to the second 
partial sum for easier indexing.

\begin{table}[tbhp]
\caption{Our roundoff error model (guard digits, no fused multiply-add)}\label{tab_fp}
\begin{center}
\begin{tabular}{|c || l | l |}\hline
$\quad$Operation$\quad$ & Floating point arithmetic$\qquad\qquad\quad$  & Exact computation \\
\hline\hline
$*$ & $\hs_1  =  x_1y_1$ & $s_1=x_1y_1$         \\
    & $\hs_2 = \hs_1\,(1+\delta_1 )$ & $s_2=s_1$ \\
$*$ & $\hs_{2k-1} = \hs_{2k-2}+x_{k}y_{k}\,(1+\delta_{2k-2})$ & $s_{2k-1} = s_{2k-2}+x_ky_k$ \\
$+$ & $\hs_{2k} = \hs_{2k-1}\,(1+\delta_{2k-1})$ & $s_{2k} =s _{2k-1}$ \\
\hline
Output & $\hs_{2n}=\fl(\vx^T\vy)$ & $s_{2n}=\vx^T\vy$\\
\hline 
\end{tabular}
\end{center}
\end{table}

\subsection{A motivating deterministic bound}\label{s_motdet2}
First we bound the computed partial sums in terms of the inputs, and the unit roundoff~$u$.

\begin{lemma}\label{l_shat}
Let the roundoffs satisfy $|\delta_k|\leq u$,  $1\leq k\leq 2n-1$. 

Then the partial sums computed in Table~\ref{tab_fp} are bounded by
\begin{eqnarray*}
|\hs_{2k-1}|&\leq &  |x_1y_1|\>(1+u)^{k-1} +|x_2y_2|\>(1+u)^{k-1}+\cdots  +|x_{k}y_{k}|\> (1+u)\\
&=& |x_1y_1|\>(1+u)^{k-1}+\sum_{j=2}^k{|x_jy_j|\>(1+u)^{k-j+1}}, \qquad 1\leq k\leq n,
\end{eqnarray*}
and
\begin{eqnarray*}
|\hs_{2k}| &\leq &  |x_1y_1|\,(1+u)^k + |x_2y_2|\>(1+u)^k+\cdots + |x_ky_k|\,(1+u)^2\\
&=& |x_1y_1|\>(1+u)^k+ \sum_{j=2}^{k}{|x_jy_j|\>(1+u)^{k-j+2}}, \qquad 1\leq k\leq n.
\end{eqnarray*}
\end{lemma}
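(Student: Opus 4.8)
The plan is to prove both displayed bounds simultaneously by induction on~$k$, exploiting the fact that the two families of partial sums are interleaved by the recursions in Table~\ref{tab_fp}: the multiplication step feeds $\hs_{2(k-1)}$ into $\hs_{2k-1}$, and the addition step feeds $\hs_{2k-1}$ into $\hs_{2k}$. The invariant I carry is the even-indexed bound, so that the induction hypothesis at step~$k$ is the asserted bound for $|\hs_{2(k-1)}|$; from it I derive first the odd bound for $|\hs_{2k-1}|$ and then the even bound for $|\hs_{2k}|$, the latter feeding the next step. Throughout I use only the elementary estimate $|1+\delta_j|\le 1+|\delta_j|\le 1+u$, valid for every roundoff since $|\delta_j|\le u$.

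For the base case $k=1$, the bounds follow directly from the first two rows of Table~\ref{tab_fp}: $\hs_1=x_1y_1$ gives $|\hs_1|\le|x_1y_1|=|x_1y_1|\,(1+u)^0$, and $\hs_2=\hs_1\,(1+\delta_1)$ gives $|\hs_2|\le|x_1y_1|\,(1+u)$, which matches the claimed expressions since both summations are empty when $k=1$.

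For the inductive step I would first treat the odd-indexed sum. Applying the triangle inequality to $\hs_{2k-1}=\hs_{2k-2}+x_ky_k\,(1+\delta_{2k-2})$ yields $|\hs_{2k-1}|\le|\hs_{2(k-1)}|+|x_ky_k|\,(1+u)$. Substituting the induction hypothesis for $|\hs_{2(k-1)}|$ (whose exponents read $(1+u)^{(k-1)-j+2}=(1+u)^{k-j+1}$) and observing that the new term satisfies $|x_ky_k|\,(1+u)=|x_ky_k|\,(1+u)^{k-k+1}$ — precisely the $j=k$ summand — extends the sum to run up to $j=k$ and produces the claimed bound for $|\hs_{2k-1}|$. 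I would then obtain the even bound essentially for free: from $\hs_{2k}=\hs_{2k-1}\,(1+\delta_{2k-1})$ we have $|\hs_{2k}|\le(1+u)\,|\hs_{2k-1}|$, and multiplying the bound just derived by $(1+u)$ raises every exponent by one, converting $(1+u)^{k-1}$ into $(1+u)^k$ and each $(1+u)^{k-j+1}$ into $(1+u)^{k-j+2}$, which is exactly the asserted bound for $|\hs_{2k}|$.

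The argument is a routine induction, and I do not anticipate any conceptual difficulty; the only place that demands care is the bookkeeping of the exponents. Specifically, I must verify that the multiplication roundoff attached to $x_ky_k$ enters with exactly the exponent $(1+u)^1$ needed to slot in as the top ($j=k$) term of the summation, and that the subsequent multiplication by $(1+u)$ in the addition step shifts all exponents uniformly. Because the model in Table~\ref{tab_fp} deliberately separates each rounding into its own indexed step, this index tracking is essentially the entire content of the proof, and an off-by-one error in the exponents is the principal risk to guard against.
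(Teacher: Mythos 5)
Your proof is correct and takes essentially the same approach as the paper: a simultaneous induction with base case $k=1$, passing from the even bound to the odd bound via the triangle inequality on $\hs_{2k-1}=\hs_{2k-2}+x_ky_k\,(1+\delta_{2k-2})$, and then from the odd bound to the even bound by multiplying through by $(1+u)$. The exponent bookkeeping you flag as the main risk is handled correctly (the hypothesis exponents $(1+u)^{(k-1)-j+2}=(1+u)^{k-j+1}$ and the new $j=k$ term slot in exactly as you claim), so there is nothing to fix.
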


\begin{proof} The proof is by induction, starting with the basis for $k=1$,
\begin{eqnarray*}
|\hs_1|&=& |x_1y_1|=|x_1y_1|\>(1+u)^0,\\
|\hs_2| &=& |\hs_1\>(1+\delta_1)| \leq |x_1y_1|\>(1+u).
\end{eqnarray*}
Assuming, as the hypothesis, that the statement of the lemma is correct, 
the induction step gives for $1\leq k\leq n-1$,
\begin{eqnarray*}
|\hs_{2k+1}| &=& |\hs_{2k} +x_{k+1}y_{k+1}\>(1+\delta_{2k})|\leq |\hs_{2k}| +|x_{k+1}y_{k+1}|\>(1+u)\\
&\leq & |x_1y_1|\,(1+u)^{k} +\sum_{j=2}^k{|x_jy_j|\>(1+u)^{k-j+2}} +|x_{k+1}y_{k+1}|\> (1+u)\\
&=& |x_1y_1|\,(1+u)^{k} +\sum_{j=2}^k{|x_jy_j|\>(1+u)^{(k+1)-j+1}} +|x_{k+1}y_{k+1}|\> (1+u)\\
&=& |x_1y_1|\,(1+u)^{k} +\sum_{j=2}^{k+1}{|x_jy_j|\>(1+u)^{(k+1)-j+1}},
\end{eqnarray*}
and for $1\leq k\leq n-1$,
\begin{eqnarray*}
|\hs_{2k+2}|&=& |\hs_{2k+1}\,(1+\delta_{2k+1})|\leq |\hs_{2k+1}|\,(1+u)\\
&=& |x_1y_1|\,(1+u)^{k+1} +\sum_{j=2}^{k+1}{|x_jy_j|\>(1+u)^{(k+1)-j+2}}.
\end{eqnarray*}
$\quad$
\end{proof}

The \textit{total} forward error is 
\begin{eqnarray}\label{e_tfe2}
Z_{2n} \equiv \hs_{2n}-s_{2n} =\fl(\vx^T\vy)-\vx^T\vy,
\end{eqnarray}
while the \textit{partial sum forward errors} are
\begin{eqnarray*}
Z_k \equiv \hs_k-s_k, \qquad 1\leq k\leq 2n, 
\end{eqnarray*}
where $Z_1= 0$.
We use these partial sum errors to distinguish the newly arrived roundoff from the previous roundoffs. 
Then we establish a recursion for the partial sum errors $Z_k$, and bound the difference
between two successive partial sum errors $Z_k$ and $Z_{k-1}$ by the  "incremental error" $c_k\>u$.
This incremental error $c_k\>u$ captures the most recent roundoff introduced when moving from $Z_{k-1}$ to $Z_k$.

\begin{lemma}\label{l_lfe}
The forward errors for the partial sums in Table~\ref{tab_fp} satisfy the recursions
\begin{eqnarray*}
Z_{2k} &=& Z_{2k-1} +\hs_{2k-1}\> \delta_{2k-1}, \qquad 1\leq k\leq n,\\
Z_{2k-1} &=& Z_{2k-2} +x_ky_k\> \delta_{2k-2}, \qquad 2\leq k\leq n.
\end{eqnarray*}
If $|\delta_k|\leq u$, $1\leq k\leq 2n-1$, then 
\begin{eqnarray*}
|Z_{2k}-Z_{2k-1}| \leq  c_{2k-1}\>u, \qquad 1\leq k\leq n,
\end{eqnarray*}
where 
\begin{eqnarray*}
|\hs_{2k-1}|\leq c_{2k-1}\equiv |x_1y_1|\>(1+u)^{k-1}+\sum_{j=2}^{k}{|x_jy_j|\>(1+u)^{k-j+1}},
\end{eqnarray*}
and for $2\leq k\leq n$, 
\begin{eqnarray*}
|Z_{2k-1}-Z_{2k-2}| \leq c_{2k-2}\>u, \qquad \text{where}\quad c_{2k-2} \equiv |x_{k}y_{k}|.
\end{eqnarray*}
\end{lemma}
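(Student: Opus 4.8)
The plan is to obtain each recursion directly from the definitions in Table~\ref{tab_fp} by substituting the floating-point and exact updates into $Z_k = \hs_k - s_k$, and then to read off the increment bounds by combining the boundedness of the roundoffs with the partial-sum bound already proved in Lemma~\ref{l_shat}. No induction or probabilistic machinery is needed here: this is a bookkeeping step whose sole purpose is to isolate the single roundoff introduced at each stage of the accumulation.

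For the recursions I would treat the ``$+$'' and ``$*$'' rows of the table separately but in the same spirit. For the even index, substituting $\hs_{2k} = \hs_{2k-1}(1+\delta_{2k-1})$ and $s_{2k} = s_{2k-1}$ into $Z_{2k} = \hs_{2k} - s_{2k}$, then expanding $\hs_{2k-1}(1+\delta_{2k-1}) = \hs_{2k-1} + \hs_{2k-1}\delta_{2k-1}$ and regrouping $(\hs_{2k-1} - s_{2k-1}) = Z_{2k-1}$, yields $Z_{2k} = Z_{2k-1} + \hs_{2k-1}\delta_{2k-1}$. For the odd index, substituting $\hs_{2k-1} = \hs_{2k-2} + x_k y_k(1+\delta_{2k-2})$ and $s_{2k-1} = s_{2k-2} + x_k y_k$ causes the two exact copies of $x_k y_k$ to cancel, leaving $Z_{2k-1} = Z_{2k-2} + x_k y_k\delta_{2k-2}$.

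The increment bounds then follow immediately: each recursion writes the difference of successive partial-sum errors as a single product of a roundoff with a computed or input quantity, so taking absolute values and using $|\delta_j| \leq u$ gives $|Z_{2k} - Z_{2k-1}| = |\hs_{2k-1}|\,|\delta_{2k-1}| \leq |\hs_{2k-1}|\,u$ and $|Z_{2k-1} - Z_{2k-2}| = |x_k y_k|\,|\delta_{2k-2}| \leq |x_k y_k|\,u$. Invoking the bound on $|\hs_{2k-1}|$ from Lemma~\ref{l_shat} identifies it with $c_{2k-1}$, while $|x_k y_k| = c_{2k-2}$ holds by definition. The one point requiring care --- and the only place an error could slip in --- is the index alignment: I would verify that the quantity bounded in Lemma~\ref{l_shat} carries exactly the subscript $2k-1$ appearing in the recursion, and that the convention of Section~\ref{s_remodel2} of recording the addition roundoff in a subsequent step leaves all subscripts consistent throughout. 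This alignment, rather than any analytic difficulty, is the main obstacle.
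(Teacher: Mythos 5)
Your proposal is correct, and the computations are the same as the paper's: substitute the floating-point and exact updates from Table~\ref{tab_fp} into $Z_k=\hs_k-s_k$, regroup to expose $Z_{k-1}$, take absolute values with $|\delta_j|\leq u$, and invoke Lemma~\ref{l_shat} to identify $c_{2k-1}$ as a bound on $|\hs_{2k-1}|$ (with $c_{2k-2}=|x_ky_k|$ by definition). The one presentational difference is that the paper wraps these substitutions in an induction, stating a basis ($Z_2$, $Z_3$) and an induction step; but, as you observe, the induction is unnecessary here --- and indeed in the paper's own ``induction step'' the hypothesis is never actually invoked, since each recursion follows from the table's definitions alone. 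Your framing is therefore cleaner without losing anything: the only place where induction genuinely enters is inside Lemma~\ref{l_shat}, which both you and the paper cite rather than reprove, and your index check (that Lemma~\ref{l_shat}'s bound on $|\hs_{2k-1}|$ carries exactly the subscript appearing in the even-index recursion) is precisely the alignment the paper relies on.
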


\begin{proof}
The proof is by induction, following the recursions in Table~\ref{tab_fp}. Since $Z_1=0$,
the induction starts one step later than the one in Lemma~\ref{l_lfe}, and the induction basis is
\begin{eqnarray*}
Z_2 & =& \hs_2 -s_2 = \hs_1(1+\delta_1) -s_1 = Z_1 + \hs_1\>\delta_1,\\
Z_3 &=& \hs_3-s_3 = \hs_2+x_2y_2\>(1+\delta_2)-\left(s_2+x_2y_2\right) =Z_2+x_2y_2\>\delta_2.
\end{eqnarray*}
Assuming, as the hypothesis, that the statement of the lemma is correct, 
the induction step gives for $1\leq k\leq n-1$,
\begin{eqnarray*}
Z_{2k+2}&=&\hs_{2k+2} -s_{2k+2} = \hs_{2k+1}\>(1+\delta_{2k+1}) -s_{2k+1} 
= Z_{2k+1} + \hs_{2k+1}\>\delta_{2k+1}, 
\end{eqnarray*}
and for  $2\leq k\leq n-1$,
\begin{eqnarray*}
Z_{2k+1} &= &\hs_{2k+1}-s_{2k+1} =
\hs_{2k}+x_{k+1}y_{k+1}\>(1+\delta_{2k}) -(s_{2k}+x_{k+1}y_{k+1})\\
& = & Z_{2k} + x_{k+1}y_{k+1}\>\delta_{2k}.
\end{eqnarray*}
Lemma~\ref{l_shat} and the above recursions imply the bounds
\begin{eqnarray*}
|Z_2-Z_1|& =&  |\hs_1\>\delta_1|\leq |\hs_1|\>u\leq c_1\>u \qquad \text{where} \quad c_1=|x_1y_1|,\\
|Z_3-Z_2|& =& |x_2y_2\>\delta_2| \leq |x_2y_2|\>u \qquad \text{where}\quad c_2=|x_2y_2|.
\end{eqnarray*}
In general,
\begin{eqnarray*}
|Z_{2k}-Z_{2k-1}| &=& |\hs_{2k-1}\>\delta_{2k-1}| \leq |\hs_{2k-1}|\> u\leq c_{2k-1}\> u,
\qquad 2\leq k\leq n,
\end{eqnarray*}
where $c_{2k-1}=|x_1y_1|\>(1+u)^{k-1}+\sum_{j=2}^{k}{|x_jy_j|\>(1+u)^{k-j+1}}$, and
\begin{eqnarray*}
|Z_{2k+1}-Z_{2k}| &=& |x_{k+1}y_{k+1}\>\delta_{2k}|\leq |x_{k+1}y_{k+1}|\>u\leq c_{2k}\>u,
\qquad 2\leq k\leq n-1,
\end{eqnarray*}
where $c_{2k}=|x_{k+1}y_{k+1}|$.
\end{proof}

\begin{theorem}[Deterministic version of Theorem~\ref{t_4}]\label{t_det4}
Let the roundoffs satisfy $|\delta_k|\leq u$, $1\leq k\leq 2n-1$.

Then the relative forward error of the computed inner product $\hs_{2n}=\fl(\vx^T\vy)$ in Table~\ref{tab_fp} is bounded by
\begin{eqnarray*}
\left|\frac{\fl(\vx^T\vy)-\vx^T\vy}{\vx^T\vy}\right| =\left|\frac{\hs_{2n}-s_{2n}}{s_{2n}}\right|
\leq  \sqrt{2n-1}\>\frac{\sqrt{\sum_{k=1}^{2n-1}{c_k^2}}}{|\vx^T\vy|}\> u,
\end{eqnarray*}
where 
\begin{eqnarray*}
c_{2k-1} &=& |x_1y_1|\>(1+u)^{k-1}+\sum_{j=2}^{k}{|x_jy_j|(1+u)^{k-j+1}}, \qquad 1\leq k\leq n\\
c_{2k-2} &= &|x_{k}y_{k}| ,\qquad 2\leq k\leq n.
\end{eqnarray*}
\end{theorem}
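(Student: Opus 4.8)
The plan is to exploit the recursion for the partial sum forward errors established in Lemma~\ref{l_lfe}, together with the fact that $Z_1=0$, to express the total forward error $Z_{2n}=\hs_{2n}-s_{2n}$ as a telescoping sum of single-step increments. Each increment carries exactly one freshly introduced roundoff, so the sum has $2n-1$ terms, one for each roundoff recorded in Table~\ref{tab_fp}. Concretely, I would write
\[
Z_{2n}=Z_{2n}-Z_1=\sum_{m=1}^{2n-1}\left(Z_{m+1}-Z_m\right),
\]
using $Z_1=0$, and then apply the triangle inequality to get $|Z_{2n}|\leq\sum_{m=1}^{2n-1}|Z_{m+1}-Z_m|$.

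Next I would invoke the two families of increment bounds from Lemma~\ref{l_lfe}, namely $|Z_{2k}-Z_{2k-1}|\leq c_{2k-1}\,u$ and $|Z_{2k-1}-Z_{2k-2}|\leq c_{2k-2}\,u$. The odd indices $m=2k-1$ (for $1\leq k\leq n$) and the even indices $m=2k-2$ (for $2\leq k\leq n$) together exhaust the range $1\leq m\leq 2n-1$ without gap or overlap, so that $|Z_{m+1}-Z_m|\leq c_m\,u$ for every $m$, and hence $|Z_{2n}|\leq u\sum_{m=1}^{2n-1}c_m$. The step from here is the same norm argument used in Corollary~\ref{c_04}: collecting the coefficients into the nonnegative vector $\vc\equiv\begin{pmatrix}c_1 & \cdots & c_{2n-1}\end{pmatrix}^T$ and passing from the $1$-norm to the $2$-norm gives
\[
\sum_{m=1}^{2n-1}c_m=\|\vc\|_1\leq\sqrt{2n-1}\,\|\vc\|_2=\sqrt{2n-1}\,\sqrt{\sum_{k=1}^{2n-1}c_k^2}.
\]
Combining this with the telescoping bound and dividing through by the constant $|\vx^T\vy|=|s_{2n}|$ yields the claimed inequality.

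The only point requiring genuine care is the index bookkeeping: one must check that the odd- and even-indexed increment bounds from Lemma~\ref{l_lfe} partition the full range $1\leq m\leq 2n-1$ exactly, and that the model's convention of allocating the first addition's roundoff to the second partial sum (so that $Z_1=0$) is precisely what lets the telescoping start cleanly. Beyond this accounting there is no analytic obstacle; the result is deterministic and follows from the triangle inequality followed by a single norm equivalence. I would note that the factor $\sqrt{2n-1}$, rather than the $\sqrt{n}$ appearing in Corollary~\ref{c_04}, reflects that the bookkeeping of Table~\ref{tab_fp} records all $2n-1$ roundoffs separately, which is exactly the structure that the Martingale of the probabilistic counterpart Theorem~\ref{t_4} will mirror.
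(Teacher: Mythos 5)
Your proposal is correct and follows essentially the same route as the paper's own proof: a telescoping decomposition of $Z_{2n}-Z_1$ into the $2n-1$ single-roundoff increments, the increment bounds from Lemma~\ref{l_lfe}, and the one-norm/two-norm relation already used in Corollary~\ref{c_04}. Your extra care with the index bookkeeping (odd and even increments partitioning $1\leq m\leq 2n-1$) is a point the paper leaves implicit, but it is the same argument.
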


\begin{proof}
Represent the total error (\ref{e_tfe2}) as a telescoping sum of incremental errors
\begin{eqnarray*}
\fl(\vx^T\vy)-\vx^T\vy=Z_{2n}=(Z_{2n}-Z_{2n-1})+(Z_{2n-1}-Z_{2n-2})+ \cdots +(Z_2-Z_1),
\end{eqnarray*}
where $Z_1=0$.
With the expressions for $c_k$ from Lemma~\ref{l_lfe},
\begin{eqnarray*}
|Z_{2n}| &\leq & \underbrace{|Z_{2n}-Z_{2n-1}|}_{\leq \,c_{2n-1} \,u} +
\underbrace{|Z_{2n-1}-Z_{2n-2}|}_{\leq \,c_{2n-2}\, u}+ \cdots + \underbrace{|Z_2-Z_1|}_{\leq\, c_1\, u}
\leq \sum_{k=1}^{2n-1}{c_k}\, u.
\end{eqnarray*}
As in the proof of Corollary~\ref{c_04}, the relation between the vector one- and two-norms implies
\begin{eqnarray*}
\sum_{k=1}^{2n-1}{c_k}  \leq  \sqrt{2n-1} \> \sqrt{\sum_{k=1}^{2n-1}{c_k^2}}.
\end{eqnarray*}
\end{proof}

\subsection{Probabilistic forward error bound}\label{s_prob2}
We derive a probabilistic bound based on an Azuma Martingale, which does not require independence of roundoffs,
and then compare the probabilistic and deterministic bounds.

\begin{definition}[Martingale, Definition 12.1 in \cite{MitzUp2005}]\label{d_4}
A sequence of random variables $Z_1, Z_2\ldots$ is a Martingale with respect to a sequence 
$\delta_1, \delta_2, \ldots $ if for $k\geq 1$
\begin{enumerate}
\item $Z_k$ is a function of $\delta_1, \ldots, \delta_{k-1}$,
\item $\E[|Z_k|]<\infty$,
\item $\E\left[Z_{k+1} |\delta_1, \ldots, \delta_{k-1}\right]  = Z_k$.
\end{enumerate}
\end{definition}
\medskip

The version of the Martingale below is tailored to our context.
 
\begin{lemma}[Azuma-Hoeffding Martingale, Theorem 12.4 in \cite{MitzUp2005}]\label{l_4}
Let $Z_1,  \ldots, Z_{2n}$ be a Martingale with 
\begin{eqnarray*}
|Z_{k}-Z_{k-1}|\leq c_{k-1}, \qquad 2\leq k\leq 2n.
\end{eqnarray*}
Then for  any $0<\delta<1$ with probability at least $1-\delta$,
\begin{eqnarray*}
\left|Z_{2n}-Z_1\right|\leq \sqrt{\sum_{k=1}^{2n-1}{c_k^2}}\>\sqrt{2\ln{(2/\delta)}}.
\end{eqnarray*}
\end{lemma}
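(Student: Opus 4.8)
The plan is to proceed exactly as in the proof of Lemma~\ref{l_1}: invoke the exponential tail form of the Azuma--Hoeffding inequality and then invert it to obtain the stated high-probability bound. Under the hypotheses that $Z_1,\ldots,Z_{2n}$ is a Martingale (Definition~\ref{d_4}) with bounded differences $|Z_k-Z_{k-1}|\leq c_{k-1}$ for $2\leq k\leq 2n$, Theorem~12.4 in \cite{MitzUp2005} supplies the two-sided concentration bound
\[
\Pr\left[\,|Z_{2n}-Z_1|\geq t\,\right]\leq 2\exp\left(-\frac{t^2}{2\sum_{k=1}^{2n-1}{c_k^2}}\right),\qquad t>0.
\]
The $2n-1$ difference bounds $c_1,\ldots,c_{2n-1}$ appearing in the hypothesis are exactly the quantities entering the denominator, so no manipulation beyond matching the offset in $c_{k-1}$ is required.

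Next I would convert the tail bound into the advertised form. Setting the right-hand side equal to the target failure probability,
\[
\delta\equiv 2\exp\left(-\frac{t^2}{2\sum_{k=1}^{2n-1}{c_k^2}}\right),
\]
and solving for $t$ gives $t=\sqrt{\sum_{k=1}^{2n-1}{c_k^2}}\,\sqrt{2\ln(2/\delta)}$. Since the event $|Z_{2n}-Z_1|\geq t$ then occurs with probability at most $\delta$, its complement $|Z_{2n}-Z_1|\leq t$ holds with probability at least $1-\delta$, which is precisely the claim. This is the same inversion step already carried out for Azuma's inequality in Lemma~\ref{l_1}.

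Since the result is a direct specialization of a cited theorem, there is no genuine obstacle; the only point requiring care is bookkeeping of the index offset, ensuring that the difference bound $c_{k-1}$ for the step from $Z_{k-1}$ to $Z_k$ lines up with the summation range $1\leq k\leq 2n-1$ in the concentration constant. If instead one wanted a self-contained argument rather than citing \cite{MitzUp2005}, the substantive work would move into establishing the exponential tail bound itself: the crux would be the conditional moment-generating-function estimate $\E\!\left[e^{\lambda(Z_k-Z_{k-1})}\mid \delta_1,\ldots,\delta_{k-1}\right]\leq e^{\lambda^2 c_{k-1}^2/2}$, obtained from Hoeffding's lemma applied to the zero-mean, bounded Martingale difference, followed by iterating the tower property over $k$, a Chernoff bound on $e^{\lambda(Z_{2n}-Z_1)}$, optimization over $\lambda$, and a union over the two tails to produce the factor of~$2$.
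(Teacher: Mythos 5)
Your proposal is correct and follows essentially the same route as the paper's own proof: both invoke \cite[Theorem 12.4]{MitzUp2005} with the index set to $m=2n-1$, equate the two-sided exponential tail bound to $\delta$, solve for $t=\sqrt{\sum_{k=1}^{2n-1}c_k^2}\,\sqrt{2\ln(2/\delta)}$, and pass to the complementary event. The extra sketch of a self-contained moment-generating-function argument is a fine addition but not needed, since the paper likewise treats this as a direct inversion of the cited theorem.
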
 

\begin{proof}
In \cite[Theorem 12.4]{MitzUp2005}, set
\begin{eqnarray*}
\delta \equiv \Pr\left[ |Z_m-Z_0| \geq t\right] \leq 2\exp\left(-\frac{t^2}{2\sum_{k=1}^m{c_k^2}}\right).
\end{eqnarray*}
and $m=2n-1$, and then solve for $t$ in terms of $\delta$.
If $|Z-\E[Z]| \geq t$ holds with probability at most~$\delta$, then the complementary event 
$|Z-\E[Z]| \leq t$ holds with probability at least~$1-\delta$.
\end{proof}

Again, the roundoffs are represented as bounded, zero-mean random variables, but now they are not required
to be independent.
The following bound resembles the one in Theorem~\ref{t_2}, but contains more summands.

\begin{theorem}\label{t_4}
Let the roundoffs $\delta_k$ be random variables with $\E[\delta_k]=0$ and $|\delta_k|\leq u$, $1\leq k\leq 2n-1$.

Then for any $0<\delta<1$, with probability at least $1-\delta$,
the relative forward error of the computed inner product $\hs_{2n}=\fl(\vx^T\vy)$ in Table~\ref{tab_fp} is bounded by
\begin{eqnarray*}
\left|\frac{\fl(\vx^T\vy)-\vx^T\vy}{\vx^T\vy}\right|=\left|\frac{\hs_{2n}-s_{2n}}{s_{2n}}\right| 
\leq \frac{\sqrt{\sum_{k=1}^{2n-1}{c_k^2}}}{|\vx^T\vy|}\>\sqrt{2\,\ln{(2/\delta)}} \> u,
\end{eqnarray*}
where 
\begin{eqnarray*}
c_{2k-1} &= & |x_1y_1|\>(1+u)^{k-1}+\sum_{j=2}^{k}{|x_jy_j|\,(1+u)^{k-j+1}}, \qquad 1\leq k\leq n,\\
c_{2k-2} & = & |x_ky_k|, \qquad 2\leq k\leq n.
\end{eqnarray*}
\end{theorem}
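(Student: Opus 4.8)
The plan is to recognize the sequence of partial-sum forward errors $Z_1,\ldots,Z_{2n}$ from Lemma~\ref{l_lfe} as a Martingale with respect to the roundoff sequence $\delta_1,\ldots,\delta_{2n-1}$, and then invoke the bounded-difference inequality of Lemma~\ref{l_4}. Unlike the independence-based Theorem~\ref{t_04}, which splits the \emph{total} error into independent summands and appeals to Azuma's inequality (Lemma~\ref{l_1}), here I would exploit the recursions of Lemma~\ref{l_lfe}, which present the error as a process whose increments are controlled one roundoff at a time, in the sequential order of the computation. This is precisely the structure that a Martingale is designed to handle, and it is what lets us dispense with independence.

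The core of the argument is to write the recursions of Lemma~\ref{l_lfe} uniformly as $Z_m = Z_{m-1} + a_{m-1}\,\delta_{m-1}$ for $2\leq m\leq 2n$, where the multiplier is the deterministic quantity $a_{2k-2}=x_ky_k$ on the odd steps $m=2k-1$ and the already-computed partial sum $a_{2k-1}=\hs_{2k-1}$ on the even steps $m=2k$. In both cases $a_{m-1}$ is a function of $\delta_1,\ldots,\delta_{m-2}$ alone: the constant $x_ky_k$ trivially, and $\hs_{2k-1}$ because by Lemma~\ref{l_shat} it is assembled from roundoffs introduced no later than step $2k-2$. I would then check the three conditions of Definition~\ref{d_4}. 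Condition~(1), that $Z_m$ depends only on $\delta_1,\ldots,\delta_{m-1}$, follows by induction from the recursion together with $Z_1=0$; condition~(2), $\E[|Z_m|]<\infty$, is immediate since every $\delta_k$ and every $\hs_k$ is bounded (Lemma~\ref{l_shat}); and condition~(3) follows because conditioning on $\delta_1,\ldots,\delta_{m-2}$ freezes both $Z_{m-1}$ and $a_{m-1}$, giving
\begin{eqnarray*}
\E\left[Z_m \mid \delta_1,\ldots,\delta_{m-2}\right]
&=& Z_{m-1} + a_{m-1}\,\E\left[\delta_{m-1}\mid \delta_1,\ldots,\delta_{m-2}\right]\\
&=& Z_{m-1}.
\end{eqnarray*}

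The main obstacle is exactly the last equality: it needs the roundoff $\delta_{m-1}$ to have \emph{conditional} mean zero given the earlier roundoffs, $\E[\delta_{m-1}\mid \delta_1,\ldots,\delta_{m-2}]=0$, which is strictly stronger than the unconditional hypothesis $\E[\delta_{m-1}]=0$ and is the weakest condition under which the Martingale property survives once independence is dropped; I would therefore make this mean-independence explicit where condition~(3) is verified. With the Martingale property established, the remainder is mechanical: Lemma~\ref{l_lfe} supplies the increment bounds $|Z_k-Z_{k-1}|\leq c_{k-1}\,u$ for $2\leq k\leq 2n$, so applying Lemma~\ref{l_4} with $c_{k-1}u$ playing the role of its $c_{k-1}$ yields, with probability at least $1-\delta$,
\begin{eqnarray*}
\left|Z_{2n}-Z_1\right| &\leq& \sqrt{\sum_{k=1}^{2n-1}(c_k u)^2}\>\sqrt{2\ln(2/\delta)} = u\,\sqrt{\sum_{k=1}^{2n-1}c_k^2}\>\sqrt{2\ln(2/\delta)}.
\end{eqnarray*}
Finally I would substitute $Z_1=0$ and $Z_{2n}=\fl(\vx^T\vy)-\vx^T\vy$ from~(\ref{e_tfe2}) and divide through by the constant $|\vx^T\vy|$ to reach the stated relative bound.
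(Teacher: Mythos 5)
Your proposal is correct and takes essentially the same route as the paper: it uses the recursions of Lemma~\ref{l_lfe} to verify that $Z_1,\ldots,Z_{2n}$ is a Martingale with respect to $\delta_1,\ldots,\delta_{2n-1}$ (Definition~\ref{d_4}), applies Lemma~\ref{l_4} with the increment bounds $|Z_k-Z_{k-1}|\leq c_{k-1}\,u$, and divides by the constant $|\vx^T\vy|$. Your one point of departure is a virtue rather than a flaw: the paper silently writes $\E[\delta_{2k-2}\,|\,\delta_1,\ldots,\delta_{2k-3}]=\E[\delta_{2k-2}]=0$ as though it followed from the unconditional zero-mean hypothesis, whereas you correctly identify that this conditional mean-zero (mean-independence) property is strictly stronger than $\E[\delta_k]=0$ and must be assumed explicitly for the Martingale property to survive once independence is dropped.
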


\begin{proof}
Since $Z_1=0$, Table~\ref{tab_fp} implies for the total forward error (\ref{e_tfe2}) that
\begin{eqnarray*} 
|\fl(\vx^T\vy)-\vx^T\vy|=|\hs_{2n}-s_{2n}|=|Z_{2n}|=|Z_{2n}-Z_1|.
\end{eqnarray*}
To apply Lemma~\ref{l_4},
we show that the partial sum forward errors $Z_1, Z_2\ldots, Z_{2n}$ form a Martingale with respect to the roundoffs
$\delta_1,\ldots,\delta_{2n-1}$. To this end, we need to check the conditions in Definition~\ref{d_4} and Lemma~\ref{l_4}.
\begin{enumerate}
\item The recursions in Lemma~\ref{l_lfe}  show that $Z_k$ is a function of the roundoffs $\delta_1,\ldots, \delta_{k-1}$,
$2\leq k\leq 2n$.
\item The expectation of $|Z_k|$ is finite because $|Z_k|$ is a finite sum of bounded summands, and the roundoffs have zero mean. 
\item Lemma~\ref{l_lfe} implies $Z_2=Z_1+x_1y_1\>\delta_1$ where $Z_1=0$.
The linearity of expectation and zero-mean property of the roundoffs implies
\begin{eqnarray*}
\E[Z_2]=\E\left[Z_1+x_1y_1\>\delta_1\right]=Z_1+ x_1y_1\>\E[\delta_1]=Z_1.
\end{eqnarray*}
More generally, item~1 implies that $Z_{2k-2}$ depends on $\delta_1,\ldots, \delta_{2k-3}$, $2\leq k\leq n$.
Conditioning on all of these roundoffs removes the randomness and produces a fixed value,
\begin{eqnarray*}
\E[Z_{2k-2}\, | \delta_1,\ldots,\delta_{2k-3}]=Z_{2k-2}, \qquad 2\leq k\leq n,
\end{eqnarray*}
Combine the above with the zero-mean property of the roundoffs 
\begin{eqnarray*}
\E[\delta_{2k-2}\,|\delta_1,\ldots,\delta_{2k-3}]=\E[\delta_{2k-2}]=0
\end{eqnarray*}
and Lemma~\ref{l_lfe} to conclude
\begin{eqnarray*}
\E[Z_{2k-1}\,|\delta_1,\ldots, \delta_{2k-3}]&=&\E[Z_{2k-2}+x_ky_k\>\delta_{2k-2}\,|\delta_1,\ldots, \delta_{2k-3}] \\
&=& Z_{2k-2}+x_ky_k \E[\delta_{2k-2}]=Z_{2k-2}, \qquad 2\leq k\leq n.
\end{eqnarray*}
Now consider the remaining recursions $Z_{2k}=Z_{2k-1} +\hs_{2k-1}\>\delta_{2k-1}$, $1\leq k\leq n$.
Item~1 and Table~\ref{tab_fp} show that $Z_{2k-1}$ and $\hs_{2k-1}$ depend only on the roundoffs $\delta_1,\ldots,\delta_{2k-2}$.
Conditioning $Z_{2k-1}$ and $\hs_{2k-1}$ on all of these roundoffs removes the randomness and produces  fixed values,
\begin{eqnarray*}
\E[Z_{2k-1}\, | \delta_1,\ldots,\delta_{2k-2}] &=& Z_{2k-1}, \qquad 1\leq k\leq n,\\
\E[\hs_{2k-1}\, | \delta_1,\ldots,\delta_{2k-2}]&=& \hs_{2k-1}, \qquad 1\leq k\leq n.
\end{eqnarray*}
Arguing as above shows
\begin{eqnarray*}
\E[Z_{2k}\,|\delta_1,\ldots, \delta_{2k-2}]&=&\E[Z_{2k-1}+\hs_{2k-1}\delta_{2k-1}\,|\delta_1,\ldots, \delta_{2k-2}] \\
&=&Z_{2k-1}+\hs_{2k-1} \E[\delta_{2k-1}]=Z_{2k-1}, \qquad 1\leq k\leq n.
\end{eqnarray*}
Thus,  $Z_1, Z_2, \ldots, Z_{2n}$ form a Martingale with respect to $\delta_1,\ldots, \delta_{2n-1}$.
\item Lemma~\ref{l_lfe} implies 
\begin{eqnarray*}
|Z_{2k}-Z_{2k-1}| \leq  c_{2k-1}\>u, \qquad 1\leq k\leq n,
\end{eqnarray*}
where 
\begin{eqnarray*}
|\hs_{2k-1}|\leq c_{2k-1}\equiv |x_1y_1|\>(1+u)^{k-1}+\sum_{j=2}^{k}{|x_jy_j|\>(1+u)^{k-j+1}},
\end{eqnarray*}
and for $2\leq k\leq n$, 
\begin{eqnarray*}
|Z_{2k-1}-Z_{2k-2}| \leq c_{2k-2}\>u, \qquad \text{where}\quad c_{2k-2} \equiv |x_{k}y_{k}|.
\end{eqnarray*}
\end{enumerate}
Thus, the conditions for Lemma~\ref{l_lfe} are satisfied, and we can use it to bound $|Z_{2n}-Z_1|$
with the above $c_k$ from Lemma~\ref{l_lfe}.
\end{proof}

\begin{remark}[Comparison]\label{r_4}
The probabilistic bound in Theorem~\ref{t_4} tends to be tighter than the deterministic bound in Theorem~\ref{t_det4}.

The probabilistic bound in Theorem~\ref{t_4} holds with probability at least $1-\delta$,
\begin{eqnarray*}
\left|\frac{\fl(\vx^T\vy)-\vx^T\vy}{\vx^T\vy}\right|=\left|\frac{\hs_{2n}-s_{2n}}{s_{2n}}\right| 
\leq \frac{\sqrt{\sum_{k=1}^{2n-1}{c_k^2}}}{|\vx^T\vy|}\>\sqrt{2\,\ln{(2/\delta)}} \> u,
\end{eqnarray*}
while the deterministic bound in Theorem~\ref{t_det4} is
\begin{eqnarray*}
\left|\frac{\fl(\vx^T\vy)-\vx^T\vy}{\vx^T\vy}\right| =\left|\frac{\hs_{2n}-s_{2n}}{s_{2n}}\right|
\leq \frac{\sqrt{\sum_{k=1}^{2n-1}{c_k^2}}}{|\vx^T\vy|}\> \sqrt{2n-1}\>u,
\end{eqnarray*}
where $c_{2k-1}=|x_{k}y_{k}|$ and $c_{2k} = \sum_{j=1}^{k}{|x_jy_j|(1+u)^{k-j+1}}$, $1\leq k\leq n$.

The two bounds differ in the factors $\sqrt{2\,\ln{(2/\delta)}}$ versus $\sqrt{2n-1}$, which implies:
\begin{enumerate}
\item The deterministic bound increases with the dimension~$n$, 
while the probabilistic bound does not.
\item The probabilistic bound is tighter for $n>\ln{(2/\delta)}+\tfrac{1}{2}$. 
Specifically, with a tiny failure probability of $\delta=10^{-16}$, the probabilistic bound is tighter 
for $n\geq 39$, and $\sqrt{2\,\ln{(2/\delta)}}\leq 9$.
\end{enumerate}
\end{remark}

\subsection{Simpler forward error bounds}\label{s_uppersimpler}
We derive two upper bounds for Theorem \ref{t_det4} and~\ref{t_4} that have a simpler form,
and then conform Wilkinson's intuition \cite[Section 1.33]{Wilk94book}.

The first bound is more compact than Theorem~\ref{t_4}, and makes use of abbreviations for the leading 
subvectors of $|\vx|\circ |\vy|$, and vectors containing powers of $1+u$.

\begin{corollary}[Compact upper bound]\label{c_4a}
Define the $k$-vectors
\begin{eqnarray*} 
(\vx\circ\vy)_k \equiv \begin{pmatrix} |x_1y_1| \\ |x_2y_2|\\\vdots \\ |x_ky_k|\end{pmatrix}, \qquad 
\vu_k\equiv \begin{pmatrix}(1+u)^{k-1} \\ (1+u)^{k-1}\\ \vdots \\ 1+u\end{pmatrix}, \qquad 2\leq k\leq n.
\end{eqnarray*}
If $\tfrac{1}{p}+\tfrac{1}{q}=1$, then in Theorems \ref{t_det4} and~\ref{t_4} we have 
\begin{eqnarray*}
\sum_{k=1}^{2n-1}{c_k^2}\leq \|\vx\circ \vy\|_2^2+\sum_{k=2}^{n}{\|(\vx\circ\vy)_k\|_p^2\, \|\vu_k\|_q^2}.
\end{eqnarray*}
\end{corollary}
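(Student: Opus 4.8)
The plan is to split the sum $\sum_{k=1}^{2n-1}c_k^2$ according to the parity of the index, because the two families of $c_k$ from Lemma~\ref{l_lfe} play very different roles. The odd-indexed quantities $c_{2k-1}$, $1\leq k\leq n$, are the bounds on the accumulated partial sums $|\hs_{2k-1}|$, while the even-indexed quantities $c_{2k-2}=|x_ky_k|$, $2\leq k\leq n$, are the single-term multiplication increments. There are $n$ odd indices and $n-1$ even indices, which together account for all $2n-1$ terms.

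First I would dispose of the even-indexed terms together with the very first odd term. Summing the even ones gives $\sum_{k=2}^{n}c_{2k-2}^2=\sum_{k=2}^n|x_ky_k|^2$, and the lone $k=1$ odd term contributes $c_1^2=|x_1y_1|^2$. Adding these reconstitutes the full squared two-norm,
\begin{eqnarray*}
c_1^2+\sum_{k=2}^n c_{2k-2}^2=\sum_{k=1}^n|x_ky_k|^2=\|\vx\circ\vy\|_2^2,
\end{eqnarray*}
which is precisely the first term on the right-hand side. Consequently the entire sum reduces to $\|\vx\circ\vy\|_2^2+\sum_{k=2}^n c_{2k-1}^2$, and it remains only to bound the remaining odd terms.

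The key observation for those terms is that each $c_{2k-1}$ is exactly the inner product of the two nonnegative $k$-vectors introduced in the statement. Reading off the coefficient of $|x_jy_j|$ in $c_{2k-1}=|x_1y_1|(1+u)^{k-1}+\sum_{j=2}^k|x_jy_j|(1+u)^{k-j+1}$ shows that it equals the $j$-th entry of $\vu_k$, so that $c_{2k-1}=(\vx\circ\vy)_k^T\vu_k$. The H\"{o}lder inequality with $\tfrac1p+\tfrac1q=1$ then yields $c_{2k-1}\leq\|(\vx\circ\vy)_k\|_p\,\|\vu_k\|_q$, hence $c_{2k-1}^2\leq\|(\vx\circ\vy)_k\|_p^2\,\|\vu_k\|_q^2$. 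Summing over $2\leq k\leq n$ and combining with the reconstituted two-norm from the previous step gives the claimed bound, which applies verbatim to both Theorem~\ref{t_det4} and Theorem~\ref{t_4} since they share the same $c_k$.

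The calculations are routine, so the only genuine obstacle is the index bookkeeping: one must keep the interleaving of multiplication increments (even indices) and accumulated partial-sum bounds (odd indices) straight, and in particular notice that the isolated $c_1$ term must be merged with the even-indexed terms — rather than with the H\"{o}lder-bounded odd terms — so that the $\|\vx\circ\vy\|_2^2$ piece emerges exactly and nothing is counted twice.
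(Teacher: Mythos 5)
Your proof is correct and follows essentially the same route as the paper's: partition the sum by parity, merge the lone $c_1$ term with the even-indexed terms $c_{2k-2}=|x_ky_k|$ to reconstitute $\|\vx\circ\vy\|_2^2$ exactly, and bound each remaining $c_{2k-1}=(\vx\circ\vy)_k^T\vu_k$ via H\"{o}lder's inequality. No gaps; the index bookkeeping you flag as the main hazard is handled exactly as in the paper.
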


\begin{proof}
Partition
\begin{eqnarray*}
\sum_{k=1}^{2n-1}{c_k^2}  & = & \sum_{k=1}^n{c_{2k-1}^2}+\sum_{k=2}^{n}{c_{2k-2}^2}
= \sum_{k=2}^n{c_{2k-1}^2}+ c_1^2+\sum_{k=2}^n{c_{2k-2}^2}.
\end{eqnarray*}
From $c_1=|x_1y_1|$ and $c_{2k-2}=|x_ky_k|$, $2\leq k\leq n$,  follows
\begin{eqnarray*}
 c_1^2+\sum_{k=2}^n{c_{2k-2}^2} = \sum_{k=1}^n{|x_ky_k|^2}=\|\vx\circ\vy\|_2^2.
 \end{eqnarray*}
 Thus $\sum_{k=1}^{2n-1}{c_k^2} =  \|\vx\circ \vy\|_2^2+\sum_{k=2}^n{c_{2k-1}^2}$.
In the remaining sum, apply H\"{o}lder's inequality to each summand, 
\begin{eqnarray*}
c_{2k-1}&=& |x_1y_1|\>(1+u)^{k-1}+\sum_{j=2}^{k}{|x_jy_j|\,(1+u)^{k-j+1}}\\
&=&(\vx\circ\vy)_{k}^T\vu_{k} \leq  \|(\vx \circ \vy)_{k}\|_p\,\|\vu_{k}\|_q, \qquad 2\leq k\leq n.
\end{eqnarray*}
$\quad$
\end{proof}

The second bound, below, takes a much simpler form.

\begin{corollary}[Simplest upper bound for Theorem~\ref{t_4}]\label{c_4}
Let the roundoffs $\delta_k$  be random variables with $\E[\delta_k]=0$ and $|\delta_k|\leq u$, $1\leq k\leq 2n$; 
and let $\gamma_k$ as in (\ref{e_gamma}).

Then for any $0<\delta<1$, with probability at least $1-\delta$,
the relative forward error of the computed inner product $\hs_{2n}=\fl(\vx^T\vy)$ in Table~\ref{tab_fp} is bounded by
\begin{eqnarray}\label{e_re1}
\left|\frac{\fl(\vx^T\vy)-\vx^T\vy}{\vx^T\vy}\right|  \leq  \frac{|\vx|^T|\vy|}{|\vx^T\vy|}\>\sqrt{2\ln{(2/\delta)}}
\>\sqrt{\frac{u\>\gamma_{2n}}{2}}.
\end{eqnarray}
\end{corollary}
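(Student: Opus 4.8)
The plan is to obtain Corollary~\ref{c_4} directly from Theorem~\ref{t_4} by bounding the quantity \(\sum_{k=1}^{2n-1}c_k^2\) appearing under the square root. Comparing the two displayed estimates, the factor \(\tfrac{1}{|\vx^T\vy|}\sqrt{2\ln(2/\delta)}\) is common to both, so it suffices to show \(u\,\sqrt{\sum_{k=1}^{2n-1}c_k^2}\leq |\vx|^T|\vy|\,\sqrt{u\,\gamma_{2n}/2}\); squaring and dividing by \(u^2\), this is the single inequality \(\sum_{k=1}^{2n-1}c_k^2\leq (|\vx|^T|\vy|)^2\,\tfrac{\gamma_{2n}}{2u}\), with \(\gamma_{2n}\) as in~(\ref{e_gamma}). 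Abbreviating \(S\equiv|\vx|^T|\vy|=\sum_{j=1}^n|x_jy_j|\), this is what I would establish.

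First I would recycle the partition used in the proof of Corollary~\ref{c_4a}, namely \(\sum_{k=1}^{2n-1}c_k^2=\|\vx\circ\vy\|_2^2+\sum_{k=2}^n c_{2k-1}^2\), which collects the even-indexed terms together with \(c_1\) into \(\|\vx\circ\vy\|_2^2=\sum_{j=1}^n|x_jy_j|^2\). The crucial elementary observation is that, since every \(|x_jy_j|\geq 0\), \(\|\vx\circ\vy\|_2^2=\sum_j|x_jy_j|^2\leq\bigl(\sum_j|x_jy_j|\bigr)^2=S^2\). For the remaining odd terms I would use that in \(c_{2k-1}=|x_1y_1|(1+u)^{k-1}+\sum_{j=2}^k|x_jy_j|(1+u)^{k-j+1}\) every power of \(1+u\) is at most \((1+u)^{k-1}\), whence \(c_{2k-1}\leq(1+u)^{k-1}\sum_{j=1}^k|x_jy_j|\leq(1+u)^{k-1}S\).

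Squaring and summing the odd terms then produces a geometric series: with \(r\equiv(1+u)^2\), \(\sum_{k=2}^n c_{2k-1}^2\leq S^2\sum_{k=2}^n r^{k-1}=S^2\,\tfrac{r^n-r}{r-1}\). Adding the bound \(\|\vx\circ\vy\|_2^2\leq S^2\) telescopes cleanly, \(\sum_{k=1}^{2n-1}c_k^2\leq S^2\bigl(1+\tfrac{r^n-r}{r-1}\bigr)=S^2\,\tfrac{r^n-1}{r-1}\). Since \(r-1=(1+u)^2-1=u(2+u)\) and \(r^n-1=(1+u)^{2n}-1=\gamma_{2n}\), this reads \(\sum_{k=1}^{2n-1}c_k^2\leq S^2\,\tfrac{\gamma_{2n}}{u(2+u)}\); finally, because \(u>0\) gives \(u(2+u)\geq 2u\), I would relax the denominator to reach the clean target \(\sum_{k=1}^{2n-1}c_k^2\leq S^2\,\tfrac{\gamma_{2n}}{2u}\) and substitute back into Theorem~\ref{t_4} to obtain~(\ref{e_re1}).

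The one genuine subtlety --- and the place where a naive attempt fails --- is the decision to bound the even-indexed \(c_k\) collectively through \(\sum_j|x_jy_j|^2\leq S^2\) rather than each \(|x_jy_j|\) individually by \(S\); the latter inflates the sum by a factor on the order of \(n\) and already breaks the inequality at \(n=2\). Everything else is routine bookkeeping: the maximal-exponent bound on \(c_{2k-1}\), the geometric summation, the telescoping with the \(S^2\) term, and the harmless relaxation \(u(2+u)\geq 2u\) that trades the slightly sharper constant \(1/(u(2+u))\) for the tidier \(1/(2u)\) quoted in the statement.
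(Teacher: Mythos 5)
Your proposal is correct and follows essentially the same route as the paper: the paper proves Corollary~\ref{c_4} by specializing Corollary~\ref{c_4a} to $p=1$, $q=\infty$ (which is exactly your bound $c_{2k-1}\leq (1+u)^{k-1}\,\|\vx\circ\vy\|_1$), using $\|\vx\circ\vy\|_2\leq\|\vx\circ\vy\|_1$ for the even-indexed terms, summing the same geometric series to get $\gamma_{2n}/(u^2+2u)$, and applying the same relaxation $u(2+u)\geq 2u$ before substituting into Theorem~\ref{t_4}. The only cosmetic difference is that you re-derive the H\"{o}lder step by hand instead of citing Corollary~\ref{c_4a} as a black box.
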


\begin{proof}
In Corollary~\ref{c_4a}, choose $p=1$ and $q=\infty$, so that 
\begin{eqnarray*}
\|(\vx\circ\vy)_k\|_1\|\vu_k\|_{\infty} \leq  \|\vx\circ\vy\|_1 (1+u)^{k-1}, \qquad 2\leq k\leq n.
\end{eqnarray*}
The relation between vector norms implies $\|\vx\circ \vy\|_2\leq \|\vx\circ\vy\|_1$.
Insert the preceding two inequalities into Corollary~\ref{c_4a},
\begin{eqnarray*}
\sum_{k=1}^{2n-1}{c_k^2}\leq \|\vx\circ\vy\|_2^2+\sum_{k=2}^{n}{\|(\vx\circ\vy)_k\|_1^2\,\|\vu_k\|_{\infty}^2}
\leq \|\vx\circ\vy\|_1^2\left(1+\sum_{k=2}^{n}{(1+u)^{2(k-1)}}\right).
\end{eqnarray*}
The second factor is a geometric sum,
\begin{eqnarray*}
1+\sum_{k=1}^{n-1}{(1+u)^{2k}}=\sum_{k=0}^{n-1}{(1+u)^{2k}}=\frac{(1+u)^{2n}-1}{(1+u)^2-1}=\frac{\gamma_{2n}}{u^2+2u}.
\end{eqnarray*}
Combining the preceding inequalities gives
\begin{eqnarray*}
\sqrt{\sum_{k=1}^{2n-1}{c_k^2}}\leq \|\vx\circ\vy\|_1\,\sqrt{\frac{\gamma_{2n}}{u^2+2u}}
 \leq \|\vx\circ\vy\|_1\,\sqrt{\frac{\gamma_{2n}\,u}{2}}.
\end{eqnarray*}
At last substitute this into Theorem~\ref{t_4}.
\end{proof}

\begin{remark}[Comparison with traditional bound]\label{r_2}
We quantify and confirm Wilkinson's intuition \cite[Section 1.33]{Wilk94book}, by illustrating 
that the probabilistic bounds
in Theorem~\ref{t_4}, and Corollaries \ref{c_4a} and~\ref{c_4} are proportional to $\sqrt{n}\>u$,
while the traditional bound in Corollary~\ref{c_trad} is proportional to $n\> u$.

Let $\gamma_k=(1+u)^k-1$, $k\geq 1$, be as in (\ref{e_gamma}). 
The probabilistic bound in Corollary~\ref{c_4} holds with probability at least $1-\delta$,
\begin{eqnarray*}
\left|\frac{\fl(\vx^T\vy)-\vx^T\vy}{\vx^T\vy}\right|  \leq  \frac{|\vx|^T|\vy|}{|\vx^T\vy|}\>\sqrt{2\ln{(2/\delta)}}
\>\sqrt{\frac{u\>\gamma_{2n}}{2}},
\end{eqnarray*}
while the deterministic bound in Corollary~\ref{c_trad} equals
\begin{eqnarray*}
\left|\frac{\fl(\vx^T\vy)-\vx^T\vy}{|\vx^T\vy|}\right| &\leq& \frac{|\vx|^T|\vy|}{|\vx^T\vy|}\,\gamma_n.
\end{eqnarray*}
For large $n$, the bounds behave asymptotically like their first order terms,
\begin{eqnarray*}
\gamma_n\approx n\>u, \qquad \sqrt{\frac{u\>\gamma_{2n}}{2}}\approx \sqrt{n} \> u.
\end{eqnarray*}
For small $n$ with $2n\>u<1$, one can bound \cite[Lemma 3.1]{Higham2002},
\begin{eqnarray*}
\gamma_n\leq \frac{nu}{1-nu}, \qquad \sqrt{\frac{u\>\gamma_{2n}}{2}} \leq \frac{\sqrt{n}\> u}{\sqrt{1-2n\>u}}
\end{eqnarray*}
Thus, the probabilistic bound is proportional to $\sqrt{n}\>u$.

Furthermore, $\gamma_n>\sqrt{u\>\gamma_{2n}/2}$ for $n\geq 2$.
With a failure probability of $\delta=10^{-16}$, the probabilistic bound is tighter than the deterministic bound for $n>80$. 
\end{remark}

\section{Numerical experiments}\label{s_numexp}
After describing the setup for the experiments (Section~\ref{s_setup}), we present experiments for the 
perturbation bounds (Section~\ref{s_eperturb}), the roundoff error bounds assuming independence
(Section~\ref{s_robi1}), and the general roundoff error bounds (Section~\ref{s_rog}). 

\subsection{Experimental Setup}\label{s_setup}
We use a tiny failure probability of $\delta=10^{-16}$, which gives a probabilistic factor of $\sqrt{2\,\ln{(2/\delta)}}\leq 8.7$.

Two types of vectors $\vx$ and $\vy$ of dimension up to $n=10^8$ will be considered:
 \begin{itemize}
 \item The elements of $\vx$ and $\vy$ can have different signs. Specifically, $x_j$ and $y_j$ are 
 iid\footnote{independent identically distributed} standard normal random variables with mean~0 and variance~1, 
and $\vx$ and $\vy$ are generated with the Matlab commands
\begin{quote}
\texttt{x = \texttt{single(rand(n, 1))}, y = \texttt{single(rand(n, 1))}}
\end{quote}
\item The elements of $\vx$ and $\vy$ all have the same sign. Specifically, $x_j$ and $y_j$ are
absolute values of iid standard normal random variables,
and $\vx$ and $\vy$ are generated with the Matlab commands 
\begin{quote}
\texttt{x = \texttt{single(abs(rand(n, 1)))}, y = \texttt{single(abs(rand(n, 1)))}}
\end{quote}
\end{itemize}

The exact inner products $\vx^T\vy$ are represented by the double precision computation \texttt{dot(double(x), double(y))}
with unit roundoff $2^{-53}\approx 1.11\cdot 10^{-16}$. Bounds are computed in double precision.
Computations were performed in Matlab R2017a, on a 3.1GHz Intel Core i7 processor. 

\subsection{Experiments for the perturbation bounds}\label{s_eperturb}
 We illustrate the perturbation bounds in Section~\ref{s_perturb}. Here the vectors $\vx$ and $\vy$ are perturbed, while the
 computations are exact.
 
We select single precision perturbations $\delta_j$ and $\theta_j$ that are uniformly distributed in $[-u, u]$,
where $u=2^{-24}\approx 5.96\cdot 10^{-8}$ is the single precision roundoff, and generate
the perturbation vectors $\vdelta$ and $\vtheta$ each with the Matlab command 
\begin{quote}
\texttt{u * (2 * double(single(rand(n, 1))) - ones(n, 1))}.
\end{quote}
 The inner product of the perturbed vectors $\vhx^T\vhy$ is represented by the double precision computation 
 \texttt{dot(double(xh), double(yh))}.

\subsubsection{Amplifiers in Corollary~\ref{c_2}}\label{s_perturbamp}
We compare the amplifiers of $u(2+u)$ in the upper bounds of Corollary~\ref{c_2}, listed again below,
\begin{eqnarray}\label{e_amp}
\kappa_1&\equiv& \frac{\|\vx\circ\vy\|_1}{|\vx^T\vy|}=\frac{|\vx|^T|\vy|}{|\vx^T\vy|}=\frac{\sum_{j=1}^n{|x_jy_j|}}{|\vx^T\vy|}\\
\kappa_2 &\equiv& \sqrt{n}\>\frac{\|\vx\circ\vy\|_2}{|\vx^T\vy|}=\sqrt{n}\>\frac{\sqrt{\sum_{j=1}^{n}{|x_jy_j|^2}}}{|\vx^T\vy|}\nonumber\\
\kappa_{\infty} &\equiv& n\>\frac{\|\vx\circ\vy\|_{\infty}}{|\vx^T\vy|}=n\>\frac{\max_{1\leq j\leq n}{|x_jy_j|}}{|\vx^T\vy|}.\nonumber
\end{eqnarray}
Figure~\ref{f_amp} illustrates that, among the three amplifiers in (\ref{e_amp}), the traditional $\kappa_1$ tends to be the lowest. 
It also illustrates that
amplification of roundoff can be orders of magnitude larger for vector elements with different signs, compared to vectors where
all elements have the same sign.

\begin{figure}[h]
\begin{center}
\includegraphics[width = 2.5in]{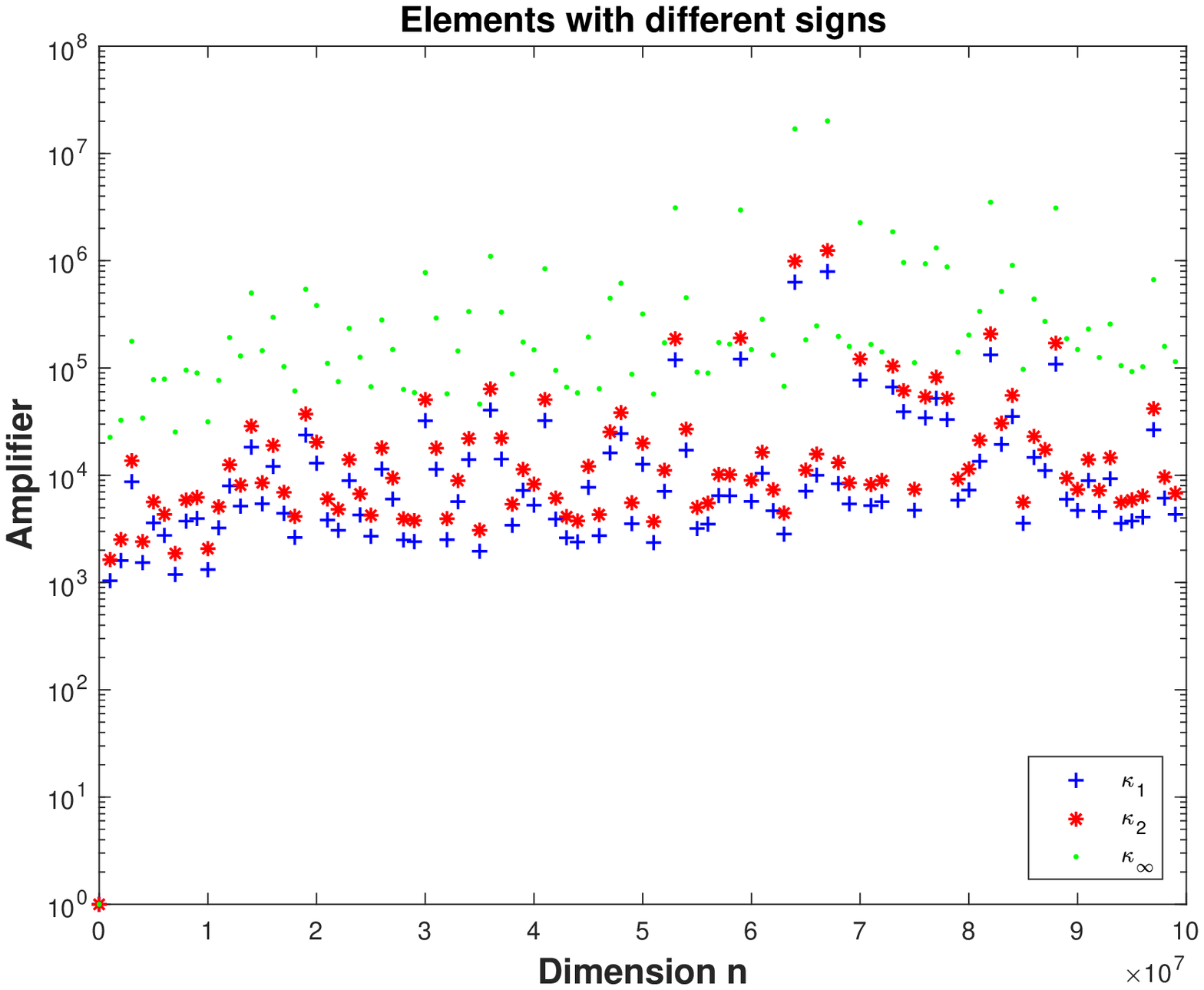}
\includegraphics[width = 2.5in]{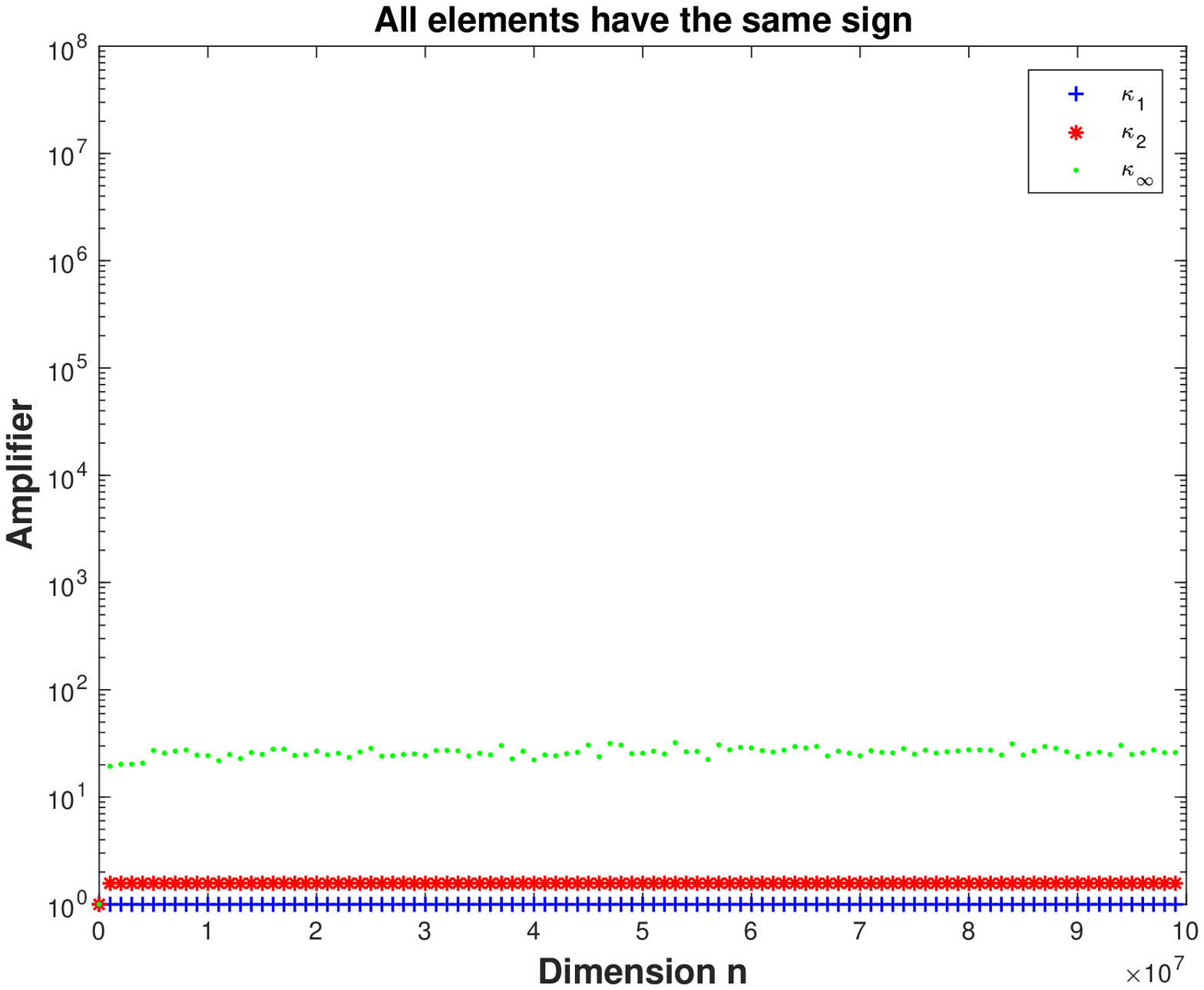}
\end{center}
\caption{Comparison of amplifiers  in (\ref{e_amp}): $\kappa_1$ (blue), $\kappa_2$ (red), and $\kappa_{\infty}$ (green) versus
vector dimensions $1\leq n\leq 10^8$ in steps of $10^6$. Vertical axis starts at 1 and ends at $10^8$.
Left panel: Elements can have different signs. Right panel: All elements have the same sign.}
\label{f_amp}
\end{figure}

\subsubsection{Probabilistic perturbation bound in Theorem~\ref{t_1} and Remark~\ref{r_1}}\label{s_perturbcomp}
This experiment follows up on Remark~\ref{r_1}, where we compare the probabilistic bound from Theorem~\ref{t_1}
to the corresponding deterministic bound from Corollary~\ref{c_2}.
 \begin{itemize}
 \item Deterministic bound
\begin{eqnarray}
\left|\frac{\vhx^T\vhy-\vx^T\vy}{\vx^T\vy}\right| & \leq & \frac{\|\vx\circ\vy\|_2}{|\vx^T\vy|}\>\sqrt{n}\>u(2+u)\label{e_eperturb1}.
\end{eqnarray}
\item Probabilistic bound holding with probability at least $1-\delta$,
\begin{eqnarray}
\left|\frac{\vhx^T\vhy-\vx^T\vy}{\vx^T\vy}\right| &\leq & \frac{\|\vx\circ\vy\|_2}{|\vx^T\vy|}\>\sqrt{2\,\ln{(2/\delta)}}\> u(2+u)\label{e_eperturb2}.
\end{eqnarray}
\end{itemize}
Figure~\ref{f_pert} illustrates that the probabilistic bound~(\ref{e_eperturb2})
tends to be at least two orders orders of magnitude tighter than the deterministic bound~(\ref{e_eperturb1}).

\begin{figure}[h]
\begin{center}
\includegraphics[width = 2.5in]{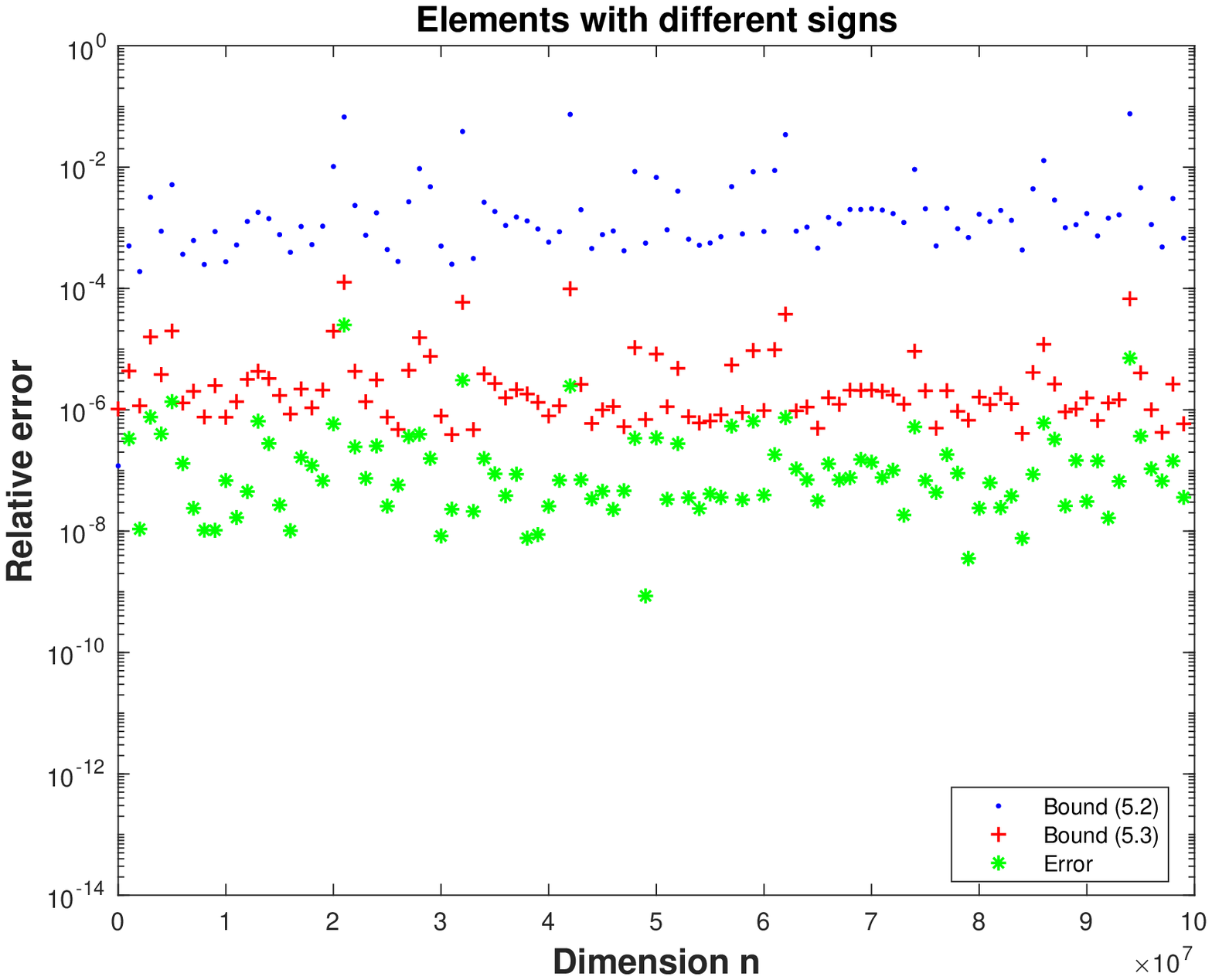}
\includegraphics[width = 2.5in]{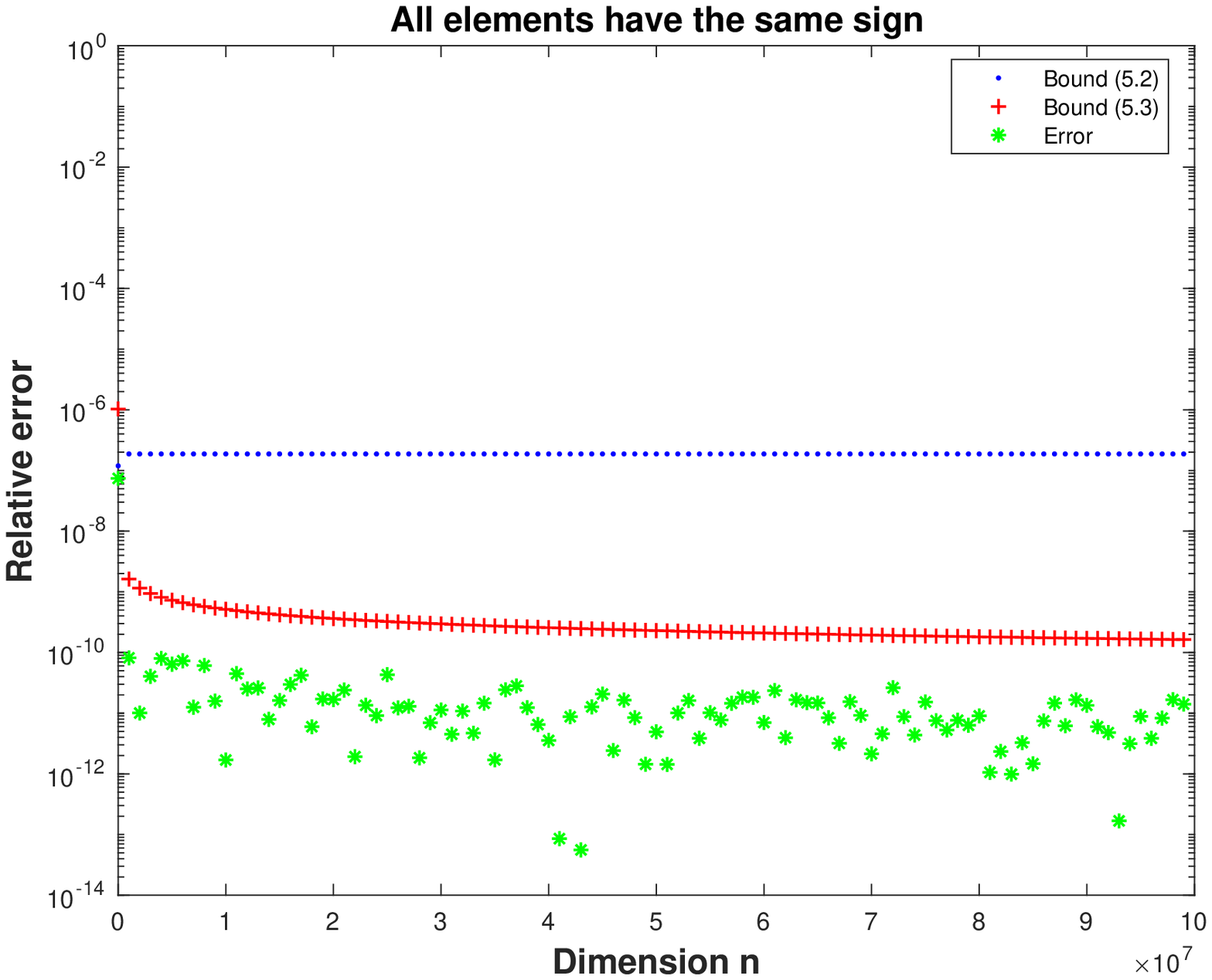}
\end{center}
\caption{Comparison of probabilistic  bound (red \ref{e_eperturb2}) with deterministic bound (blue \ref{e_eperturb1}),
and relative error (green) versus vector dimensions $1\leq n\leq 10^8$ in steps of $10^6$.
Vertical axis starts at $10^{-14}$ and ends at 1.
Left panel: Elements can have different signs. Right panel: All elements have the same sign.}
\label{f_pert}
\end{figure}

\subsection{Experiments for the roundoff error bounds based on independent roundoff}\label{s_robi1}
We illustrate the roundoff error bounds in Section~\ref{s_roundoff1}.

The inner products $\fl(\vx^T\vy)$ are computed in single precision with unit roundoff, in a loop that explicitly stores
the products $x_ky_k$ before adding them to the partial sum, so as to bypass the fused multiply-add.

Specifically, we compare the probabilistic bound in Theorem~\ref{t_04} with the corresponding deterministic bound in Corollary~\ref{c_04}.

 \begin{itemize} 
 \item Deterministic bound
\begin{eqnarray}\label{e_robi1}
\left|\frac{\fl(\vx^T\vy)-\vx^T\vy}{|\vx^T\vy|}\right| &\leq& \frac{\sqrt{\sum_{k=1}^n{c_k^2}}}{|\vx^T\vy|}\>\sqrt{n}
\end{eqnarray}
\item  Probabilistic bound holding with probability at least $1-\delta$,
\begin{eqnarray}\label{e_robi2}
\left|\frac{\fl(\vx^T\vy)-\vx^T\vy}{\vx^T\vy}\right|\leq \frac{\sqrt{\sum_{k=1}^{n}{c_k^2}}}{|\vx^T\vy|}\>\sqrt{2\,\ln{(2/\delta)}},
\end{eqnarray}
\end{itemize}
where $c_1\equiv |x_1y_1|\>\gamma_n$, and $c_k\equiv |x_ky_k|\> \gamma_{n-k+2}$, $2\leq k\leq n$,
and $\gamma_k=(1+u)^k-1$ as in~(\ref{e_gamma}).

Figure~\ref{f_robi1} illustrates that the probabilistic result (\ref{e_robi2}) tends to be two orders of magnitude tighter than the 
deterministic bound (\ref{e_robi1}) for vectors whose elements can have different signs. However, (\ref{e_robi2})
stops being a bound for vectors of large dimension all of whose elements have the same sign.

\begin{figure}[h]
\begin{center}
\includegraphics[width = 2.5in]{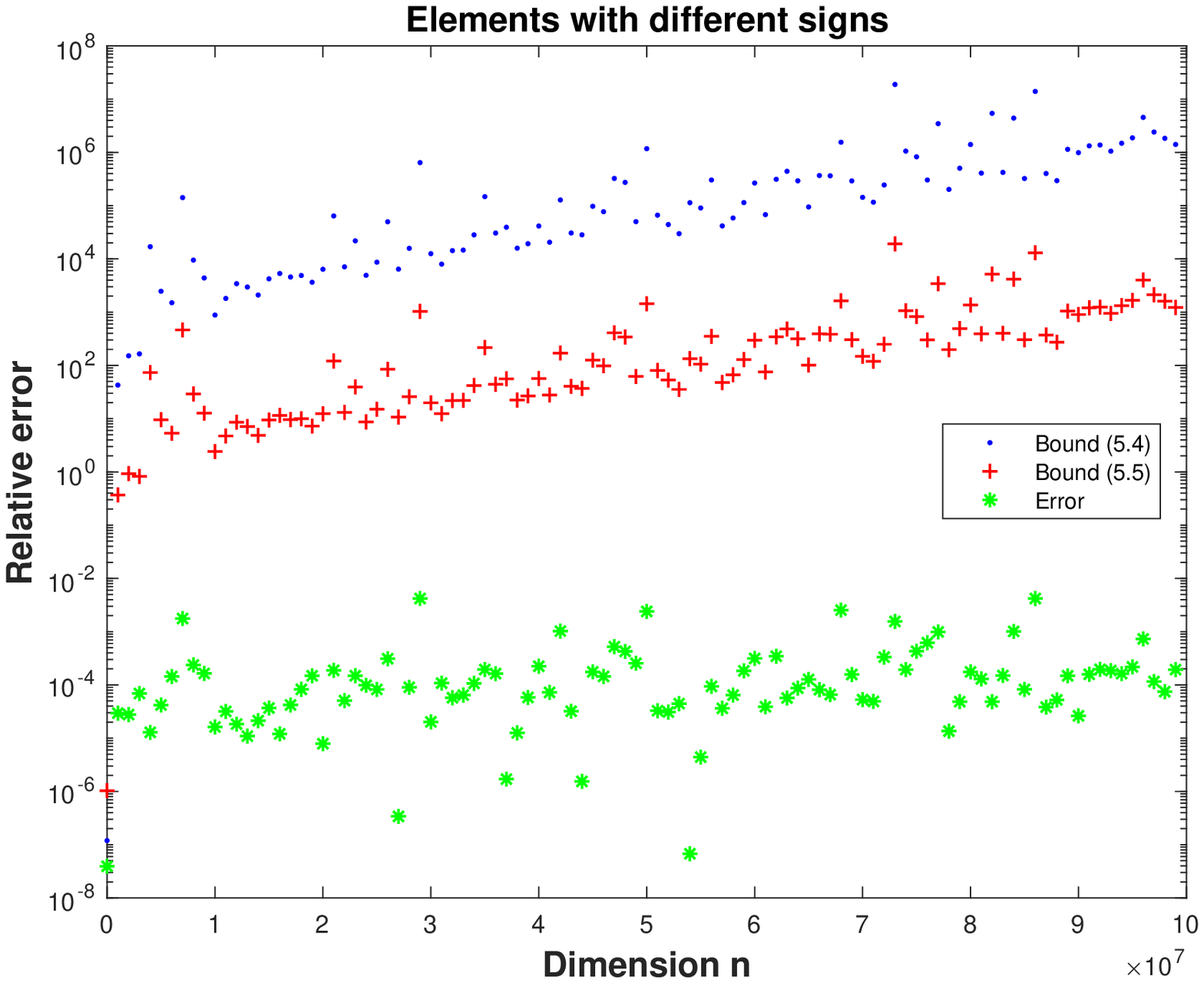}
\includegraphics[width = 2.5in]{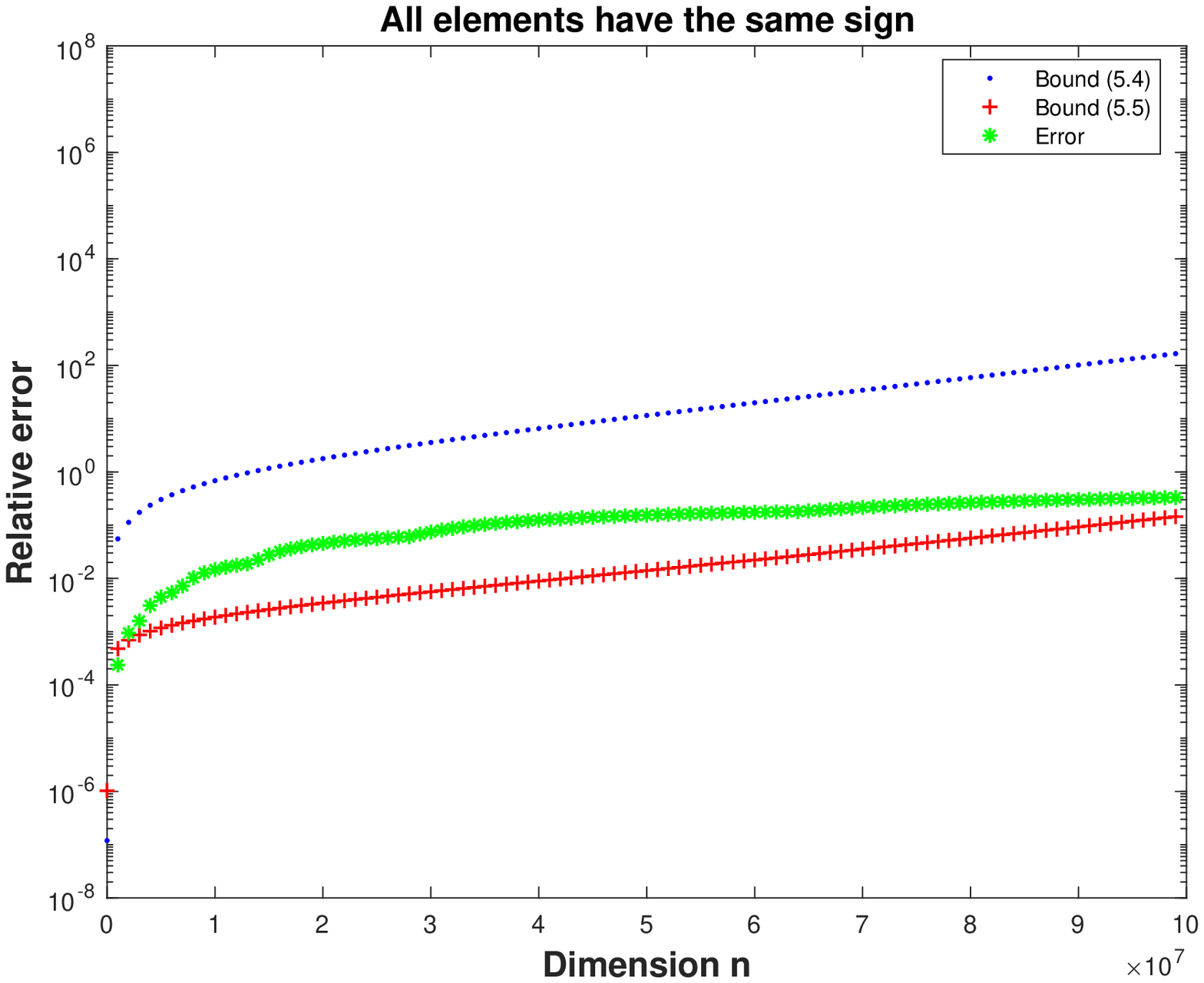}
\end{center}
\caption{Comparison of probabilistic  bound (red \ref{e_robi2}) with deterministic bound (blue \ref{e_robi1}),
and relative error (green) versus vector dimensions $1\leq n\leq 10^8$ in steps of $10^6$.
Vertical axis starts at $10^{-8}$ and ends at $10^8$.
Left panel: Elements can have different signs. Right panel: All elements have the same sign.}
\label{f_robi1}
\end{figure}

Figure~\ref{f_robi1smalllarge} zooms in on the left panel in Figure~\ref{f_robi1} 
and illustrates that (\ref{e_robi2}) remains an upper bound for vector dimensions
up to about $n=10^6$.  The fact that it ceases to be an upper bound for $n>10^6$ does not appear to be a numerical issue,
as nothing changes when the products $|x_ky_k|$ are sorted in increasing or in decreasing order of magnitude.

\begin{figure}[h]
\begin{center}
\includegraphics[width = 2.5in]{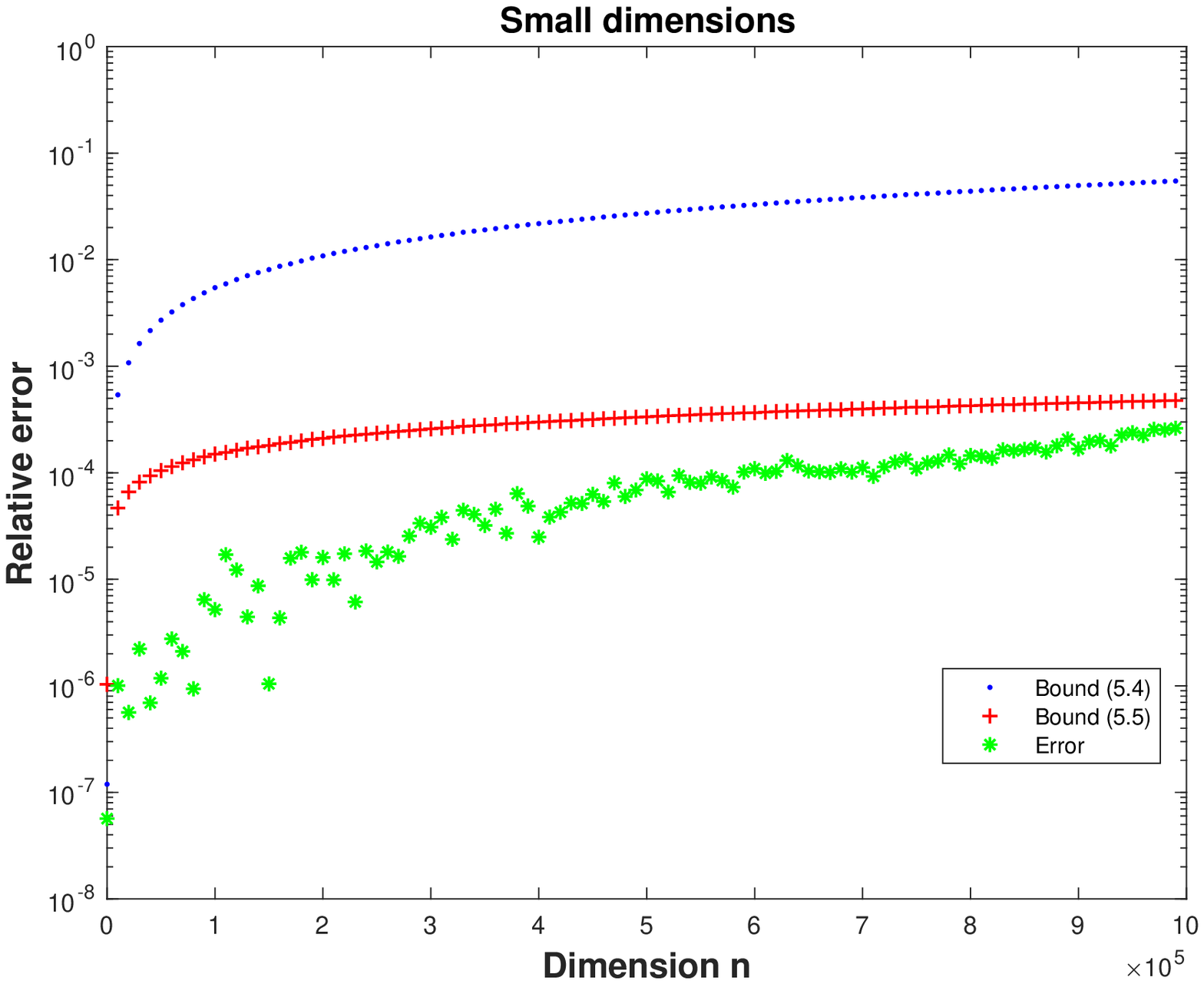}
\includegraphics[width = 2.5in]{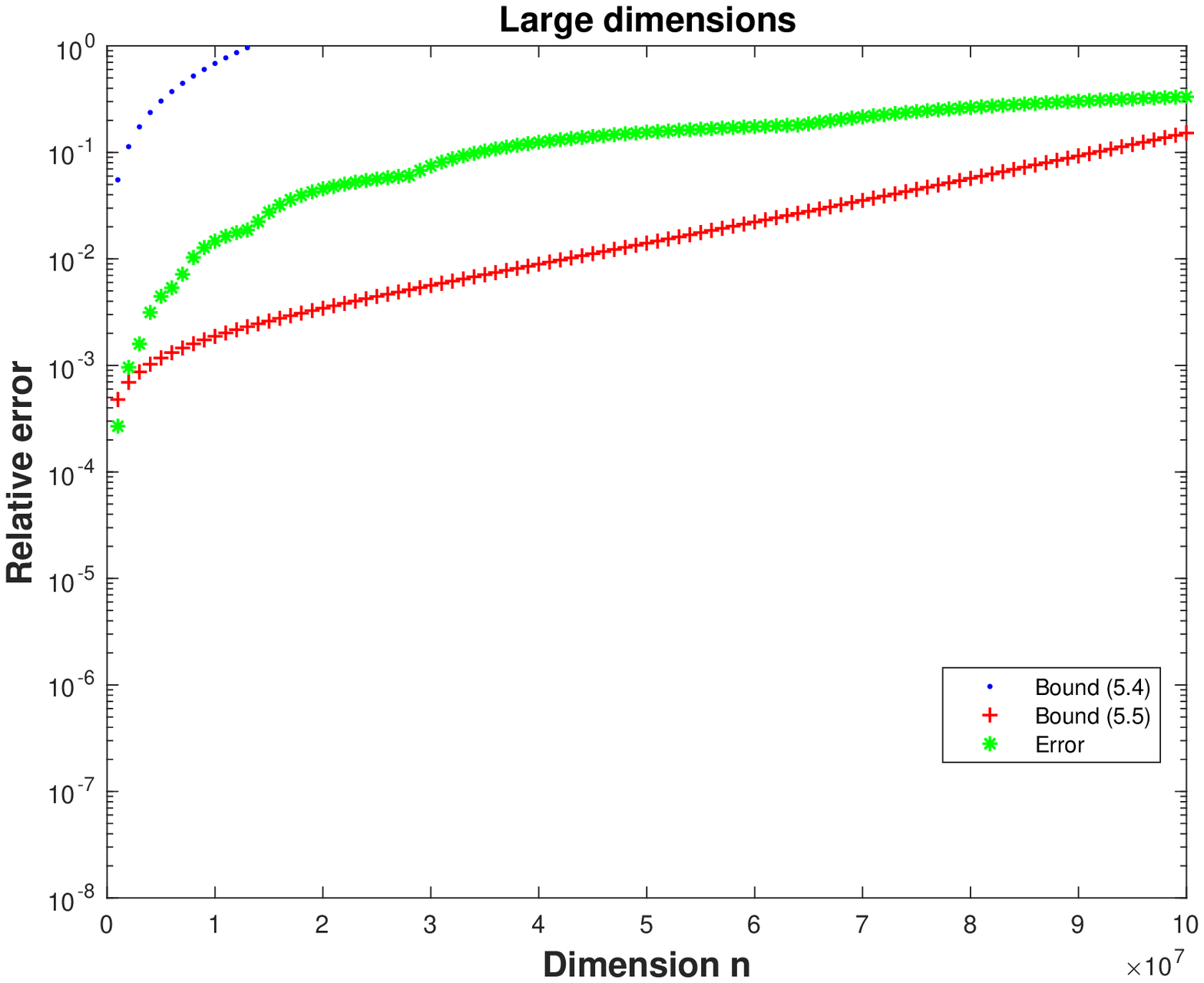}
\end{center}
\caption{Comparison of probabilistic  bound (red \ref{e_robi2}) with deterministic bound (blue \ref{e_robi1}),
and relative error (green) versus vector dimensions when all elements have the same sign.
Vertical axis starts at $10^{-8}$ and ends at $1$.
Left panel: Small dimensions $1\leq n \leq 10^6$ in steps of $10^4$. Right panel: Large dimensions $10^6\leq n\leq 10^8$
in steps of $10^6$.}
\label{f_robi1smalllarge}
\end{figure}

\subsection{Experiments for the general roundoff error bounds}\label{s_rog}
We illustrate the roundoff error bounds in Section~\ref{s_roundoff2}.

As in the previous section, the inner products $\fl(\vx^T\vy)$ are computed in single precision with unit roundoff, in a loop that explicitly stores
the products $x_ky_k$ before adding them to the partial sum, so as to bypass the fused multiply-add.

This experiment follows up on Remark~\ref{r_2}, where we compare the probabilistic bound in Corollary~\ref{c_4}
to the corresponding deterministic bound in Corollary~\ref{c_trad}.
\begin{itemize}
\item Traditional bound 
\begin{eqnarray}\label{e_rog1}
\left|\frac{\fl(\vx^T\vy)-\vx^T\vy}{\vx^T\vy}\right| \leq \frac{|\vx|^T|\vy|}{|\vx^T\vy|}\> \gamma_n,
\end{eqnarray}
\item Probabilistic bound 
\begin{eqnarray}\label{e_rog2}
\left|\frac{\fl(\vx^T\vy)-\vx^T\vy}{\vx^T\vy}\right| \leq 
\frac{|\vx|^T|\vy|}{|\vx^T\vy|}\> \sqrt{\ln{(2/\delta)}}\> \sqrt{\frac{u\>\gamma_{2n}}{2}},
\end{eqnarray}
\end{itemize}
where $\gamma_k=(1+u)^k-1$ as in~(\ref{e_gamma}).

Figure~\ref{f_rog1} illustrates that the probabilistic result (\ref{e_rog2}) tends to be at least two orders of magnitude tighter than the 
deterministic bound (\ref{e_rog1}) for vectors whose elements can have different signs. However, unfortunately,
(\ref{e_rog2}) stops being a bound for vectors of large dimension all of whose elements have the same sign.

\begin{figure}[h]
\begin{center}
\includegraphics[width = 2.5in]{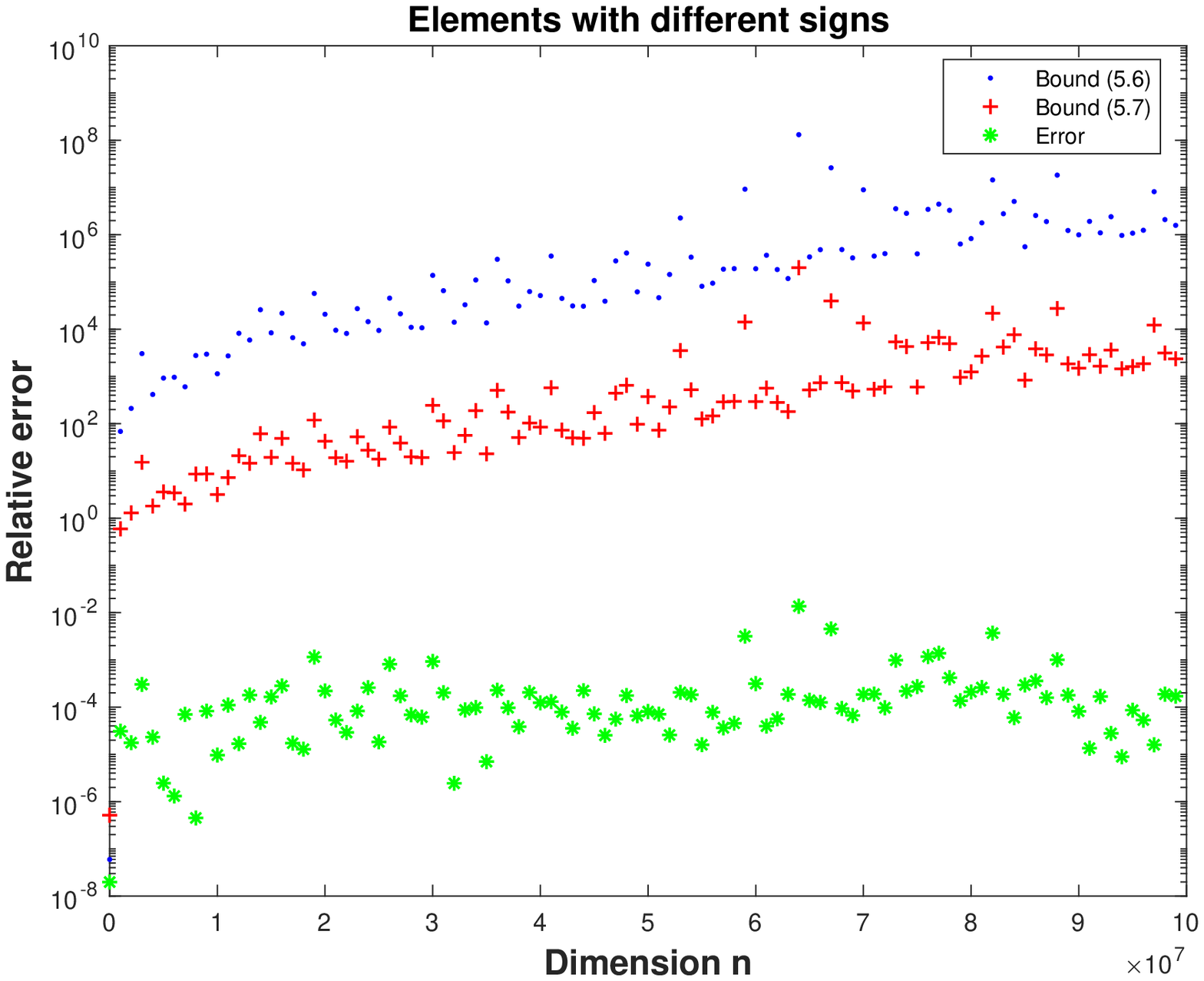}
\includegraphics[width = 2.5in]{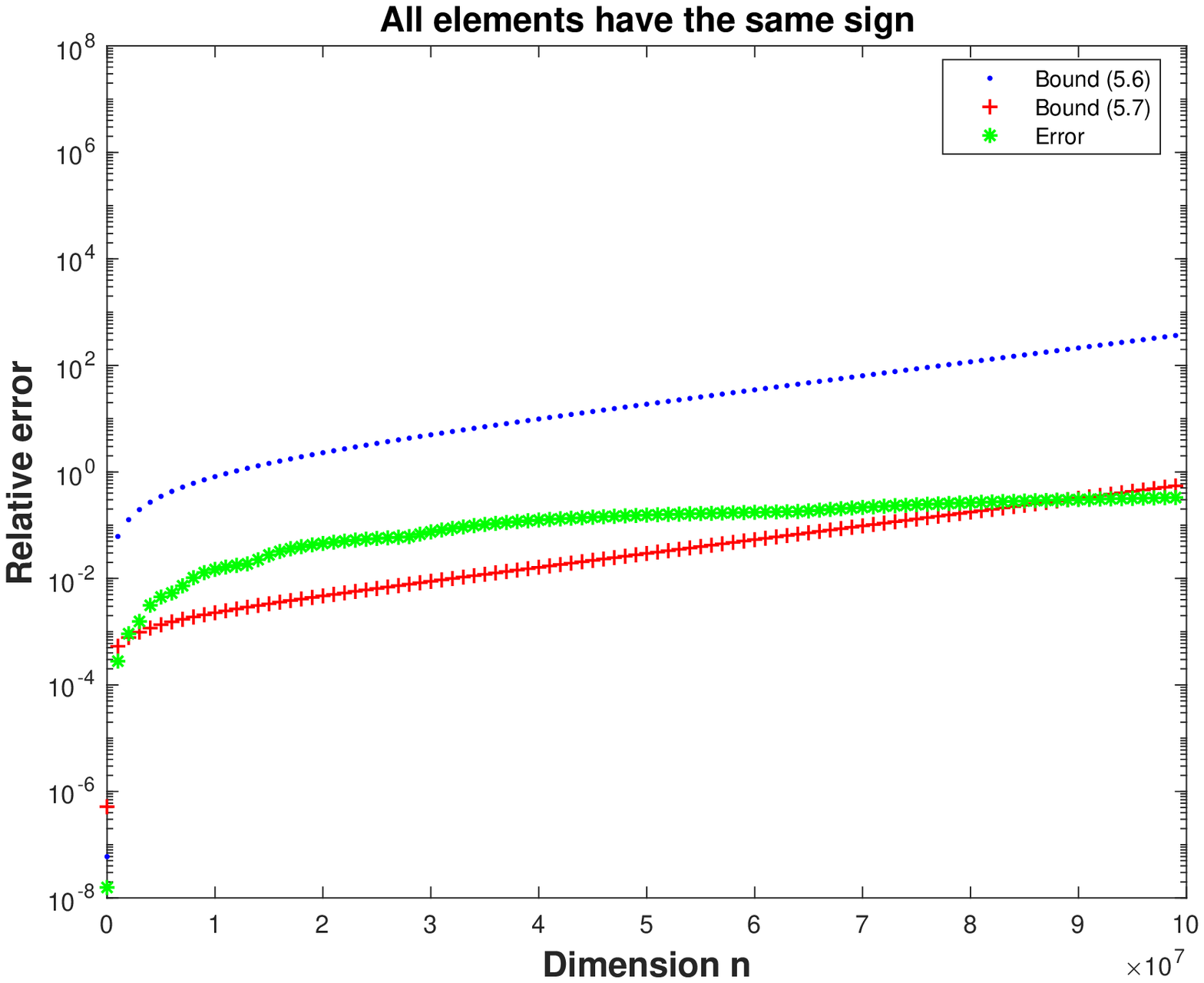}
\end{center}
\caption{Comparison of probabilistic  bound (red \ref{e_rog2}) with deterministic bound (blue \ref{e_rog1}),
and relative error (green) versus vector dimensions $1\leq n\leq 10^8$ in steps of $10^6$.
Vertical axis starts at $10^{-8}$ and ends at $10^8$.
Left panel: Elements can have different signs. Right panel: All elements have the same sign.}
\label{f_rog1}
\end{figure}

\section{Conclusions, and future work}\label{s_conc}
We presented derivations and numerical experiments for probabilistic perturbation and roundoff error bounds for the 
sequentially accumulated inner product of two real $n$-vectors, assuming a guard digit model and no fused multiply-add.
The probabilistic bounds are tighter than the corresponding deterministic bounds, often by several orders of
magnitude.

\paragraph{Issues}
However, for vectors of dimension $n\geq 10^7$ and a tiny failure probability of $\delta=10^{-16}$, 
the probabilistic results are not entirely satisfactory:  On the one hand, they are still too pessimistic for vectors whose
elements have different signs, while on the other hand they stops being upper bounds for vectors all of whose elements 
have the same sign --regardless of whether roundoffs are assumed to be  independent or not.
The latter phenomenon does not appear to be a numerical artifact.

A simple fix would be to adjust the failure probability, making it even more stringent when elements can differ in sign,
while relaxing it when all elements have the same sign.
 However, this does not get to the heart of the problem. Should  the failure probability be explicitly and systematically tied 
 to the dimension~$n$? This would be inconsistent with concentration inequalities, which do not explicitly depend on the
 number of summands. Alternatively, should one not model roundoffs as zero-mean random variables, but instead introduce a bias,
 possibly dimension-dependent,
 for vectors with structure, such as those where all elements have the same sign, see also \cite[section 4.2]{HM18}. 

\section*{Acknowledgements}
We thank Jack Dongarra, Nick Higham, and Clever Moler for helpful discussions. 

\bibliography{LS}
\end{document}